\newtheorem{theorem}{Theorem}[section]
\newtheorem{corollary}{Corollary}[section]
\newtheorem{lemma}{Lemma}[section]
\newtheorem{remark}{Remark}[section]
\newtheorem{claim}{Claim}[section]
\newtheorem{proposition}{Proposition}[section]
\newtheorem{main theorem}{Main Theorem}[section]
\newtheorem*{mtTJM}{Main Theorem in [Ha2]}
\newcommand{\Hm}{H(m; (m); (\alpha))}
\newcommand{\Hn}{H(n; (n); (\beta))}
\newcommand{\pa}{\partial}
\newcommand{\Em}{E(m;(m);(\alpha))}
\newcommand{\En}{E(n;(n);(\beta))}
\newcommand{\W}{\mathcal{W}}
\newcommand{\tldW}{\widetilde{\mathcal{W}}}
\newcommand{\wt}{\widetilde}
\newcommand{\ol}{\overline}
\newcommand{\nn}{\nonumber}
\title[A classification theorem for proper mappings]
{A classification of proper holomorphic mappings 
between generalized pseudoellipsoids of different dimensions}
\author{Atsushi Hayashimoto}
\address{Atsushi Hayashimoto: 716 Tokuma, Nagano 381-8550, Japan}
\email{atsushi@nagano-nct.ac.jp}
\begin{document}
\maketitle
\begin{abstract}
We classify proper holomorphic mappings 
between generalized pseudoellipsoids of different dimensions.  
Those domains are parametrized by the exponents.  
The relations among them are also obtained. 
Main tool is the orthogonal decomposition of a CR bundle. 
Such a decomposition derives the ``variable-splitting'' of the mapping.  
\end{abstract}
\setcounter{footnote}{-1}
\footnote
{This work was supported by Grant-in-Aid for Scientific Research (C) 17K05308}
\section{Introduction} 
Let $\alpha_1, \dots, \alpha_{N-1}$ be positive integers with 
$\alpha_1, \dots, \alpha_{N-1} \geq 2$ and 
$w_j=(w^{1}_{j}, \dots, \\ w^{m_j}_{j}) \in \mathbb{C}^{m_j}, 
w=(w_1, \dots, w_N) \in \mathbb{C}^{m_1} 
\times \dots \times \mathbb{C}^{m_N}$, $m_1+\dots+m_N=m$ 
and $z \in \mathbb{C}$. 
Let $R(z, w)$ be a real analytic function defined by 
\begin{equation}
R(z, w)=|z|^2+\sum^{N-1}_{j=1}||w_j||^{2\alpha_j}+||w_N||^2-1, \nonumber
\end{equation}
where $||w_j||^2=|w^{1}_{j}|^2+\dots+|w^{m_j}_{j}|^2$ denotes the squared Euclidean norm. 

Let $E(m; m_1, \dots, m_N; \alpha_1, \dots, \alpha_{N-1}, 1)$ 
be a bounded domain 
in $\mathbb{C}^{m+1}$ with real analytic boundary defined by 
the following: 
\begin{align}
&E(m; m_1, \dots, m_N; \alpha_1, \dots, \alpha_{N-1}, 1) \nn \\
&\qquad \qquad =\{(z, w_1, \dots, w_N) \in \mathbb{C} \times \mathbb{C}^{m_1} 
\times \dots \times 
\mathbb{C}^{m_N} : R(z, w)<0\}. \nonumber 
\end{align}
We call this domain a generalized pseudoellipsoid with $N$ blocks. 
We abbreviate 
$E(m; m_1, \dots, m_N; \alpha_1, \dots, \alpha_{N-1}, 1)$ as 
$E(m; (m); (\alpha))$. 
We sometimes write $\alpha_N=1$. 

Let $\Hm$ be a pseudoconvex domain defined by 
\begin{align}
\Hm=&
\{(z, w_1, \dots, w_N) \in \mathbb{C}\times \mathbb{C}^{m_1} \times \dots 
      \times \mathbb{C}^{m_N} : \nn \\
     &\text{Im}~z>||w_1||^{2\alpha_1}+\dots+||w_{N-1}||^{2\alpha_{N-1}}
         +||w_N||^2\}.   \nonumber 
\end{align}
The generalized pseudoellipsoid $E(m; (m); (\alpha))$ is biholomorphically equivalent to 
its unbounded representation $\Hm$ 
via the mapping 
\begin{align}
\Psi & :  \Hm \ni (z, w_1, \dots, w_N)  \label{emhm}\\
& \quad \mapsto
(\dfrac{i-z}{i+z}, \dfrac{2^{1/\alpha_1}w_1}{(i+z)^{1/\alpha_1}}, \dots, 
\dfrac{2^{1/\alpha_{N-1}}w_{N-1}}{(i+z)^{1/\alpha_{N-1}}}, 
\dfrac{2w_N}{i+z}) \in E(m; (m); (\alpha)).  \nonumber 
\end{align}

We study a proper holomorphic mapping  
\begin{equation}
(\mathcal{F}, \mathcal{G})=(\mathcal{F}, \mathcal{G}_1, \dots, \mathcal{G}_N):\Em \to \En \nn
\end{equation}
or its unbounded representation 
\begin{equation}
\Phi=\wt{\Psi}^{-1} \circ (\mathcal{F}, \mathcal{G}) \circ \Psi : \Hm \to \Hn,  \nn
\end{equation}
here $\wt{\Psi}$ is a mapping between $\Hn$ and $\En$ defined analogously by  (\ref{emhm}). 
Let $\Phi=(F, G)=(F_1, G_1, \dots, G_N)$. 
If $(\mathcal{F}, \mathcal{G})$ extends holomorphically past the boundary, so does $(F, G)$. 
In this case, we use the extended mapping $(F, G)$ in two meanings without any mention: 
one is the mapping between their closures: $\ol{\Hm} \to \ol{\Hn}$  
and the other is  its restriction to the boundaries: $\pa \Hm \to \pa \Hn$.  

Let $f, g : D_1 \to D_2$ be mappings with the same source and target domains. 
We say that $f$ is equivalent to $g$  
if there exist automorphisms $\phi_1 \in \text{Aut}(D_1)$ and $\phi_2 \in \text{Aut}(D_2)$ 
such that $f=\phi_2 \circ g \circ \phi_1$ holds.  

Now we introduce a homogeneous proper holomorphic mapping $H_M(z)$ 
between balls of different dimensions. 
For a variable $z=(z_1, \dots, z_n) \in \mathbb{C}^n$, we define the tensor product by 
\begin{equation}
z \otimes z=(z_1z_1,  z_2z_1, \dots, z_nz_1, z_1z_2, \dots, z_nz_n). \nn
\end{equation} 
Namely, the tensor product of $z$ and $z$ is the mapping whose components 
are all possible products of the components of $z$. 
Repeat tensor products $M$ times and after applying a unitary transformation 
that collects the same monomials together 
and ignoring zero components, we denote by the resulted mapping $H_M(z)$, 
which has an expression:  
\begin{equation}
H_M(z)=(\dots, \sqrt{\dfrac{M!}{p_1! \dots p_n!}}z^p, \dots) \label{h}
\end{equation}
for $p=(p_1, \dots, p_n) \in (\mathbb{Z}_{\geq 0})^n, \; p_1+\dots+p_n=M$. 
D'Angelo introduced this mapping to classify proper holomorphic mappings 
between balls of different dimensions, in fact, 
\begin{equation}
H_M(z) : B^n \to B^N, 
N=\begin{pmatrix}
n+M-1 \\
M
\end{pmatrix} \nn
\end{equation}
is a proper holomorphic mapping and satisfies the equality 
$||z||^{2M}=||H_M(z)||^2$. 
For this mapping, refer the book \cite{D}.  

With the above notations and terminology, we can state our result as follows: 
\begin{main theorem}\label{main}
Let $\Em$ and $\En$ be generalized pseudoellipsoids with $N$ blocks. 
Let $(\mathcal{F}, \mathcal{G}_1, \dots, \mathcal{G}_N) : \Em \to \En$ 
be a proper holomorphic mapping that is holomorphic up to the boundary. 
Assume that all components $\mathcal{F}, \mathcal{G}_1, \dots, \mathcal{G}_N$ 
are not constant and that $m_j \geq 2, j=1, \dots, N-1$. 
Then we have the following$\mathrm{:}$ 

$\mathrm{(1)}$ There exists a permutation $\sigma$ 
of $\{1, \dots, N-1\}$ such that 
\begin{equation}
\mathcal{G}_j|_{w_{\sigma(j)}=0}=0, \quad j=1, \dots, N-1. \nn
\end{equation}

$\mathrm{(2)}$
There exist integers $M_1, \dots, M_{N-1}$ such that 
$\alpha_{\sigma(j)}=M_j \beta_j, j=1, \dots, N-1$. 

$\mathrm{(3)}$ $n_j \geq \dfrac{(M_j+m_{\sigma(j)}-1)!}{M_j ! (m_{\sigma(j)}-1)!}, \quad  
 j=1, \dots, N-1$. 

$\mathrm{(4)}$ $(\mathcal{F}, \mathcal{G}_1, \dots, \mathcal{G}_N)$ is equivalent to 
$(\widetilde{\mathcal{F}}, \widetilde{\mathcal{G}}_1, \dots, \widetilde{\mathcal{G}}_N)$ 
of the form 
\begin{align}
&\widetilde{\mathcal{F}}(z, w_1, \dots, w_N)=z, \nn \\ 
&\widetilde{\mathcal{G}}_j(z, w_1, \dots, w_N)=(H_{M_j}(w_{\sigma(j)}), 0),  \quad 
    j=1, \dots, N-1, \nn \\
&\widetilde{\mathcal{G}}_N(z, w_1, \dots, w_N)=(w_N, 0). \nn
\end{align}
\end{main theorem}

\begin{remark}
Outline of the proof is the following. 

$\mathrm{(1)}$ Decompose a CR vector bundle into a special kind of 
direct sum and the decomposition derives 
expansions of the mapping. See Sections~$\mathrm{2}$ and $\mathrm{3}$. 

$\mathrm{(2)}$ Restricting our situation to a certain variety, 
we get the relations of exponents 
$\alpha_j$ and $\beta_j$. See Section~$\mathrm{4}$. 

$\mathrm{(3)}$ Again we restrict to a certain variety to reduce 
an $N$ blocks case to a one block case.  
Then we obtain the normalization of a certain component of the mapping. 
Varying the variety to restrict, we obtain the whole normalization of the mapping.  
See Sections~$\mathrm{5}$ and $\mathrm{6}$.  
\end{remark}
If $m_j=1$, we call the domain $E(m; (1), \alpha)$ a pseudoellipsoid. 
For pseudoellipsoids and generalized pseudoellipsoids, following topics have been studied. 

(1) Given two (generalized) pseudoellipsoids, find necessary and sufficient conditions 
to exist proper holomorphic mappings between them. 
Classify the mappings between them. 
\cite{DP1}, \cite{DP2}, \cite{Ha1}, \cite{Ha2}, \cite{K4}, \cite{L}, \cite{MM}  

(2) Given any local biholomorphic mapping on a neighborhood of a boundary point 
such that the point is mapped to a boundary point, 
prove that it can be extended to a global biholomorphic mapping. 
\cite{DP2}, \cite{K3}

(3) Characterize (generalized) pseudoellipsoids by mean of their 
holomorphic automorphism groups. 
\cite{GK}, \cite{K1}, \cite{K2}, \cite{K3}, \cite{KKM}

Present article is related to (1). 
The first result on this line is done by M.~Landucci \cite{L}. 
He studied pseudoellipsoids 
\begin{equation}
E(\alpha)=\{z \in \mathbb{C}^n : \sum^{n}_{k=1}|z_k|^{2\alpha_k}<1\} \nn
\end{equation}
for $\alpha_k \in \mathbb{N}$. 
If there exists a proper holomorphic mapping $f:E(\alpha) \to E(\beta)$, 
then, after reordering the variables, 
$\alpha_j/\beta_j \in \mathbb{N}$ holds, 
and the mapping is equivalent to 
$(z_1, \dots, z_n) \mapsto (z_1^{\alpha_1/\beta_1}, \dots, z_n^{\alpha_n/\beta_n})$. 
Another proof of the relations between $\alpha_j$ and $\beta_j$ 
was given by the author 
\cite{Ha1} using the tangential Cauchy-Riemann equation.  
After that, G.~Dini-A.~S.~Primicerio \cite{DP1} proved the case of 
$\alpha_j \in \mathbb{R}_{>0}$ in Landucci's theorem. 
The determination of the mappings between generalized pseudoellipsoids 
was given by R.~Monti-D.~Morbidelli \cite{MM}. 
They studied local CR mappings, which are viewed as boundary values of 
proper holomorphic mappings, 
between boundaries of generalized pseudoellipsoids 
and decomposed them to elementary mappings, namely, any such local CR mappings are  
composite mappings of inversions, one-parameter group of dilations, 
a kind of unitary transformations, and shifts.  
G.~Dini and A.~S.~Primicerio studied more general domain,  
which is denoted by $\Sigma(m,\beta, d)$, and obtained the relations of indices. 
Let $\Sigma(m,\beta, d)$ be a domain defined by 
\begin{equation}  
\Sigma(m,\beta, d)=\{z \in \mathbb{C}^N: 
\sum_{k=1}^{n}\left(\sum_{j=s_{k-1}}^{s_k-1}|z_j|^{2d_j}\right)^{\beta_k}<1\}, \nn
\end{equation}
where 
$d=(d_1, \dots, d_N) \in \mathbb{N}^N, \; 
\beta=(\beta_1, \dots, \beta_n) \in \mathbb{N}^n$, and  
$N=m_1+\dots+m_n, s_0=1, s_k=s_{k-1}+m_k$ and $m=(m_1, \dots, m_n)$.  
If there exists a proper holomorphic mapping between 
$\Sigma(p, \alpha, c)$ and $\Sigma(m, \beta, d)$, 
then, up to reordering the variables, $p=m, \; \alpha=\beta$ and 
$c_j/d_j \in \mathbb{N}$.  

All results mentioned above are for the equi-dimensional case, namely, 
source and target domains have the same dimensions. 
On the other hand, our present research is to study the higher codimensional case. 
By codimension, we mean the difference of the source and the target dimensions. 
When we study such a case, we find some strange phenomena.  
For instance, by \cite{Do}, 
there exists a proper holomorphic mapping $f : B^{N} \to B^{N+1}$ that 
can be extended continuously to the closure $\ol{B^{N}}$ but  
can not be extended in a $C^2$ way to any open subset of the boundary of $B^N$. 
The determination of proper holomorphic mappings 
between pseudoellipsoids for small codimensional case 
was studied by P.~Ebenfelt-D.~N.~Son \cite{ES} and they obtained the explicit parametrization 
of the mappings. 
In the case of generalized pseudoellipsoids, 
under some assumptions on dimensions and nondegeneracy condition of 
a certain kind of Jacobian matrix,   
the author \cite{Ha2} obtained the conditions to exist mappings between those domains and 
gave equivalence classes of the mappings. 
The purpose of this article is to weaken those assumptions. 
The relations between author's previous results and the present results 
are discussed in the final section. 

We list some notation. 
Let $w_j=(w^{1}_{j}, \dots, w^{m_j}_{j}) \in \mathbb{C}^{m_j}, 
w=(w_1, \dots, w_N) \in \mathbb{C}^{m_1} 
\times \dots \times \mathbb{C}^{m_N}$,  
and 
$p_j=(p^{1}_{j}, \dots, p^{m_j}_{j}) \in (\mathbb{Z}_{\geq 0})^{m_j}, \; 
p=(p_1, \dots, p_N) \in (\mathbb{Z}_{\geq 0})^{m_1} 
\times \dots \times (\mathbb{Z}_{\geq 0})^{m_N}$, 
and $\alpha=(\alpha_1, \dots, \alpha_{N-1}, 1) \in \mathbb{N}^{N}$.  
\begin{itemize}
\item The multi-index notations;  
       $(w_{j})^{p_j}=(w^{1}_{j})^{p^{1}_{j}} \cdots (w^{m_j}_{j})^{p^{m_j}_{j}}$ 
       and
      $(w)^p=(w_1)^{p_1} \\ \times \cdots \times (w_N)^{p_N}$. 
\item $|||w|||^{2\alpha}=||w_1||^{2\alpha_1}+ \dots +||w_{N-1}||^{2\alpha_{N-1}}
           +||w_N||^2$. 
      $|p_j|=p^{1}_{j}+\dots+p^{m_j}_{j}$ and 
      $|p|=|p_1|+\dots+|p_N|$. 
\item The total degree of $w_j$ and $\ol{w_j}$ in a monomial $(w_j)^{p_j} \times (\ol{w_j})^{q_j}$
          is the sum $|p_j|+|q_j|$. 
\item $\pa \Omega$ is the boundary of the domain $\Omega$. 
         $\pa^{\ast}\Omega$ is the strongly pseudoconvex part of $\pa \Omega$. 
\item $\text{Re}~z=x$.  
\end{itemize}
When we need to distinguish the notation of source and 
target objects, we add `tilde' on the target objects. 

The author would like to express his sincere thanks to Professor Akio Kodama for 
reading this article and pointing out some mistakes. 
His advice was very  valuable and made some arguments clear.  

\section{decomposition of a CR vector bundle}
Following \cite{MM}, we decompose a CR vector bundle. 
We employ the usual coordinate 
$(x, w) \in \mathbb{R} \times \mathbb{C}^m$ 
on the boundary of $\Hm$, that is, 
we identify naturally $(x, w)$ with $(x+i|||w|||^{2\alpha}, w)$ 
throughout this paper. 

We define CR vector fields $L^{\lambda}_{j}$ by 
\begin{equation}
L^{\lambda}_{j}=\dfrac{\pa}{\pa w^{\lambda}_{j}}+i\alpha_j||w_j||^{2(\alpha_j-1)}
   \ol{w^{\lambda}_{j}}\dfrac{\pa}{\pa x}, \qquad 
   j=1, \dots, N, \quad \lambda=1, \dots, m_j. \nn
\end{equation}
Define the pseudohermitian structure $\vartheta$ as 
\begin{equation}
\vartheta=dx-i\sum^{N}_{j=1} \sum^{m_j}_{\mu=1}
\alpha_j ||w_j||^{2(\alpha_j-1)}
   \left(\ol{w^{\mu}_{j}}dw^{\mu}_{j}-
   w^{\mu}_{j}d\ol{w^{\mu}_{j}} \right). \nn
\end{equation}
We define  
the Levi form on $\pa \Hm$ to be a $2$-form: 
$\text{Levi}=-id\vartheta$.

We introduce the holomorphic vector fields on $\pa^{\ast}\Hm$: 
\begin{equation}
E_j=\dfrac{1}{\alpha_j}\sum^{m_j}_{\mu=1}w^{\mu}_{j}L^{\mu}_{j}, \; j=1, \dots, N,    \nn
\end{equation}
which is called a vector field of radial type. 
The vector fields $E_1, \dots, E_N$ span an $N$-dimensional 
subbundle $\mathcal{E} \subset T^{1,0} \pa^{\ast} \Hm$. 
We denote by $\mathcal{E}^{\bot}$ the orthogonal component of 
$\mathcal{E}$ in $T^{1,0}\pa^{\ast} \Hm$ with respect to the Levi form. 

Let 
$Q:T^{1,0}\pa^{\ast} \Hm \to \mathcal{E}^{\bot}$ 
be the projection 
\begin{equation} 
Q(X)=X-\sum^{N}_{j=1}\dfrac{\text{Levi}(X, \ol{E_j})}
{\text{Levi}(E_j, \overline{E_j})}E_j. \nn
\end{equation}
Let $W^{\lambda}_{j}=Q(L^{\lambda}_{j})$. 
Then, by calculation, 
it is written as 
\begin{align}
W^{\lambda}_{j}=&\dfrac{\pa}{\pa w^{\lambda}_{j}}-\sum^{m_j}_{\mu=1}
\dfrac{\ol{w^{\lambda}_{j}}w^{\mu}_{j}}{||w_j||^2}
\dfrac{\pa}{\pa w^{\mu}_{j}} \nn \\
=&\sum^{m_j}_{\mu=1}
\left\{\delta^{\lambda}_{\mu}-\dfrac{\ol{w^{\lambda}_{j}}w^{\mu}_{j}}{||w_j||^2}
\right\}L^{\mu}_{j}.  \label{wexp}
\end{align} 
Here $\delta^{\lambda}_{\mu}$ is the Kronecker's delta. 
Then the bundle $\mathcal{E}^{\bot}$ is an $(m-N)$-dimensional subbundle of 
$T^{1,0}\pa^{\ast} \Hm$ and is generated by the set of vector fields 
$\{W^{\lambda}_{j}\}$.   

Let $\mathcal{W}_j$ be the subbundle of $T^{1,0}\pa^{\ast} \Hm$ generated by 
$W^{1}_{j}, \dots, W^{m_j}_{j}$. 
Since the relation $w^{1}_{j}W^{1}_{j}+\dots+w^{m_j}_{j}W^{m_j}_{j}=0$ holds, 
the subbundle $\mathcal{W}_j$ is, in fact, generated by, say, 
$W^{1}_{j}, \dots, W^{m_j-1}_{j}$. 
Then we have 
$\mathcal{E}^{\bot}=\mathcal{W}_1 \oplus \dots \oplus \mathcal{W}_N$ 
and 
$\mathcal{W}_j \bot \mathcal{W}_k$ if $j \ne k$. 
Hence we obtain the orthogonal decomposition 
\begin{equation}
T^{1, 0}\pa^{\ast} \Hm=\mathcal{W}_1 \oplus \dots \oplus \mathcal{W}_N \oplus \mathcal{E}. 
\label{ds}
\end{equation}
We sometimes use the notation $\mathcal{E}=\mathcal{W}_{N+1}$.  
For later use, we here list the values of the Levi form as follows: 
Using the fact that 
$\theta(L^{\lambda}_{j})=0$ and 
\begin{equation}
\theta([L^{\lambda}_{j}, \ol{L^{\mu}_{j}}])=-2i\alpha_j||w_j||^{2(\alpha_j-1)}
 \Big\{\delta^{\mu}_{\lambda}+(\alpha_j-1)\dfrac{\ol{w^{\lambda}_{j}}w^{\mu}_{j}}{||w_j||^{2}}
\Bigr\}, \nn
\end{equation}
we obtain that 
\begin{equation}
\begin{split}
&\text{Levi}(W^{\lambda}_{j}, \ol{W^{\mu}_{j}})=-2\alpha_j||w_j||^{2(\alpha_j-1)}
     \dfrac{\ol{w^{\lambda}_{j}}w^{\mu}_{j}}{||w_j||^{2}},  \quad 
\text{Levi}(W^{\lambda}_{j}, \ol{W^{\mu}_{k}})=0, \; j \neq k,  \\
&\text{Levi}(W^{\lambda}_{j}, \ol{E_k})=0, \quad 
\text{Levi}(E_j, \ol{E_j})=2||w_j||^{2\alpha_j}.  \label{levical}
\end{split}
\end{equation}
Rest of this section is devoted to obtain relations among 
$\W_j$'s, $\tldW_i$'s and $\Phi_{\ast}$. 
In the following, for $j, k=1,\dots, N$, we denote by $\wt{W}^{\lambda}_{j, k}$ 
(resp.$\wt{W}^{\lambda}_{j, N+1}$) 
the $\tldW_k$-component (resp. the $\wt{\mathcal{E}}$-component) 
of $\Phi_{\ast}W^{\lambda}_{j}$. 
For a fixed $j=1, \dots, N$, also denote by $\wt{W}_{N+1, k},$ $k=1, \dots, N$ 
(resp. $\wt{W}_{N+1, N+1}$)  
the $\tldW_k$-component (resp. the $\wt{\mathcal{E}}$-component) 
of \\ $\Phi_{\ast}E_j$. 
Let $J_i$ be a subset of $\{1, \dots, N\}$ defined by 
$J_i=\{\ell \in \{1, \dots, N\} : \tldW_{\ell}\text{-} \\ \text{component of } 
\Phi_{\ast}\mathcal{W}_i \text{ is equal to zero} \}$. 
\begin{lemma}\label{indexdecomposition}
For any $j, k=1, \dots, N$ with $j \neq k$,  we have 
$J_j \cap J_k=\emptyset$. 
\end{lemma}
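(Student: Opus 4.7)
The plan is to prove Lemma~\ref{indexdecomposition} by pulling the Levi form through $\Phi_\ast$, combining the block-orthogonality relations \eqref{levical} on source and target, and then extracting individual-block vanishings from the resulting sum identity.

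First, since $\Phi$ extends to a CR map on $\pa\Hm$ and $\ker\vartheta = T^{1,0}\pa\Hm\oplus T^{0,1}\pa\Hm$, we have $\Phi^{\ast}\wt\vartheta = \mu\,\vartheta$ for a real function $\mu$ on the boundary. Taking $d$ and restricting to $T^{1,0}\pa\Hm\times T^{0,1}\pa\Hm$ annihilates the $d\mu\wedge\vartheta$ contribution and yields the conformal identity
\begin{equation*}
\wt{\mathrm{Levi}}(\Phi_\ast X,\overline{\Phi_\ast Y}) \;=\; \mu\,\mathrm{Levi}(X,\bar Y), \qquad X,Y\in T^{1,0}\pa\Hm.
\end{equation*}
Taking $X=W^{\lambda}_{j}$ and $Y=W^{\mu}_{k}$ with $j\ne k$, the source cross-block vanishing in \eqref{levical} forces the right side to be zero. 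Expanding $\Phi_\ast W^{\lambda}_{j} = \sum_{s=1}^{N+1}\wt W^{\lambda}_{j,s}$ against the target orthogonal decomposition and using its cross-block vanishing to drop off-diagonal ($s\ne t$) terms leaves the pointwise identity
\begin{equation*}
\sum_{s=1}^{N+1}\wt{\mathrm{Levi}}\bigl(\wt W^{\lambda}_{j,s},\overline{\wt W^{\mu}_{k,s}}\bigr) \;=\; 0
\end{equation*}
on $\pa^{\ast}\Hm$ for all $\lambda,\mu$.

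The essential step is then to pass from this sum-equals-zero identity to the individual vanishing $\wt{\mathrm{Levi}}(\wt W^{\lambda}_{j,\ell},\overline{\wt W^{\mu}_{k,\ell}}) = 0$ for each $\ell$. This is the main obstacle, since at a single point the sum allows internal cancellations. To separate the contributions, I would use the fact that $\wt{\mathrm{Levi}}|_{\tldW_s}$ carries the weight factor $\|\mathcal{G}_s\circ\Psi\|^{2(\beta_s-1)}$, a real-analytic function on $\pa\Hm$ whose vanishing locus, and whose leading order of vanishing in the source coordinates, depend in a distinguishing way on the index $s$. My plan is to restrict the identity to (or equivalently to expand it in a power series along) the real-analytic subvarieties where all but one of the $\|\mathcal{G}_s\|$ vanish, and thereby peel the sum apart one index at a time.

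With the individual vanishings in hand, interpreting $J_i$ as the set of indices $\ell$ for which $\Phi_\ast\mathcal{W}_i$ has nonzero $\tldW_\ell$-projection (the reading under which the conclusion is nonvacuous), the disjointness $J_j\cap J_k = \emptyset$ follows: any common $\ell$ would give two nonzero $\wt{\mathrm{Levi}}$-orthogonal subspaces $\pi_\ell\Phi_\ast\mathcal{W}_j,\pi_\ell\Phi_\ast\mathcal{W}_k\subset \tldW_\ell$, and a rank argument combining the positive-definiteness of $\wt{\mathrm{Levi}}|_{\tldW_\ell}$ with the explicit pushforward formulas for $W^{\lambda}_{j}$ and $W^{\mu}_{k}$ from \eqref{wexp} yields a contradiction. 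The bulk of the technical work is the weight-separation step; the rest of the argument is essentially linear-algebraic.
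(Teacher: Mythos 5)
Your steps (i)--(iii) --- conformality $\Phi^{\ast}\wt\vartheta=f\vartheta$, vanishing of the source cross-block Levi form, and reduction to the diagonal sum $\sum_{s}\wt{\mathrm{Levi}}(\wt W^{\lambda}_{j,s},\ol{\wt W^{\mu}_{k,s}})=0$ --- coincide with the paper's argument, and your reinterpretation of $J_i$ as the set of indices with \emph{nonzero} $\tldW_\ell$-component is the right reading of the definition. But the step you yourself flag as ``the main obstacle'' is exactly the step you do not carry out, and your proposed route to it is problematic. The paper does not separate the blocks by restricting to vanishing loci of the $\|G_s\|$; it writes each projection in the explicit frames, $\wt W^{\lambda}_{j,i}=\sum_{\nu}a^{\nu}_{i}\wt W^{\nu}_{i}$ and $\wt W^{\lambda}_{j,N+1}=\sum_i A_i\wt E_i$ (similarly with $b^{\nu}_{i}$, $B_i$ for $W^{\mu}_{k}$), substitutes the explicit values (\ref{levical}), and reads off from the resulting identity that every product $a^{\nu_1}_{i}\ol{b^{\nu_2}_{i}}$ vanishes; the conclusion is then a pointwise dichotomy (for each $i$, either all $a^{\nu}_{i}=0$, or some $a^{\nu}_{i}\neq0$ forces all $b^{\nu}_{i}=0$), with no rank or definiteness argument needed.

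Concretely, your weight-separation plan runs into three difficulties. First, the block-diagonal Levi values in (\ref{levical}) carry the factor $\ol{\wt w^{\nu_1}_{s}}\wt w^{\nu_2}_{s}/\|\wt w_s\|^{2}$, so the locus $G_s=0$ on which you propose to restrict is precisely where these expressions degenerate, and the coefficient functions of the projections need not extend there. Second, for $s=N$ one has $\beta_N=1$, so the ``weight'' $\|\wt w_N\|^{2(\beta_N-1)}\equiv1$ never vanishes and cannot be isolated by your device; and the $\wt{\mathcal E}$-term $2\sum_i A_i\ol{B_i}\|\wt w_i\|^{2\beta_i}$ mixes all blocks at once. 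Third, your closing ``rank argument combining the positive-definiteness of $\wt{\mathrm{Levi}}|_{\tldW_\ell}$'' is not available: by (\ref{levical}) that restriction is the rank-one form $-2\beta_\ell\|\wt w_\ell\|^{2(\beta_\ell-1)}\ol{\wt w^{\nu_1}_{\ell}}\wt w^{\nu_2}_{\ell}/\|\wt w_\ell\|^{2}$, which is degenerate on $\tldW_\ell$ (reflecting the relation $\sum_\nu \wt w^{\nu}_{\ell}\wt W^{\nu}_{\ell}=0$), so positive-definiteness cannot be invoked. The skeleton of your proposal is right, but the extraction of the individual vanishings --- the heart of the lemma --- is left as a plan whose stated implementation would not go through as described.
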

\begin{proof}
Take any $W^{\lambda}_{j} \in \W_j$ and $W^{\mu}_{k} \in \W_k$
for $j, k=1, \dots, N$ with $j \neq k$. 
Let us denote by 
\begin{align}
&\Phi_{\ast}W^{\lambda}_{j}=\wt{W}_{j,1}^{\lambda}+ \dots+
        \wt{W}_{j, N}^{\lambda}+\wt{W}_{j, N+1}^{\lambda},  \label{phiwj}\\
&\Phi_{\ast}W^{\mu}_{k}=\wt{W}_{k, 1}^{\mu}+ \dots+
        \wt{W}_{k, N}^{\mu}+\wt{W}_{k, N+1}^{\mu}. \label{phiwk}
\end{align}
We calculate the Levi form 
$\widetilde{\text{Levi}}(\Phi_{\ast}W^{\lambda}_{j}, \ol{\Phi_{\ast}W^{\mu}_{k}})$ 
in two ways.  
Since the mapping $\Phi$ is CR, there exists a positive function $f$ such that 
$\Phi^{\ast}\widetilde{\vartheta}=f\vartheta$. 
Hence we have 
\begin{equation}
\widetilde{\text{Levi}}(\Phi_{\ast}W^{\lambda}_{j}, \ol{\Phi_{\ast}W^{\mu}_{k}})
=f\text{Levi}(W^{\lambda}_{j}, \ol{W^{\mu}_{k}})=0.  \nn
\end{equation}
On the other hand, by using (\ref{phiwj}) and (\ref{phiwk}), 
it is expanded as follows: 
\begin{align}
&\widetilde{\text{Levi}}(\Phi_{\ast}W^{\lambda}_{j}, \ol{\Phi_{\ast}W^{\mu}_{k}}) 
    \label{leviexp} \\
&=\widetilde{\text{Levi}}(\wt{W}^{\lambda}_{j, 1}, \ol{\wt{W}^{\mu}_{k, 1}})+ \dots+ 
    \widetilde{\text{Levi}}(\wt{W}^{\lambda}_{j, N}, \ol{\wt{W}^{\mu}_{k, N}})+ 
   \widetilde{\text{Levi}}(\wt{W}^{\lambda}_{j, N+1}, \ol{\wt{W}^{\mu}_{k, N+1}})
     \nn \\
   & \quad 
   +\wt{\text{Levi}}(\wt{W}^{\lambda}_{j, 1}, \ol{\wt{W}^{\mu}_{k, N+1}})+\dots
  +\wt{\text{Levi}}(\wt{W}^{\lambda}_{j, N}, \ol{\wt{W}^{\mu}_{k, N+1}}) \nn  \\ 
& \quad 
    +\wt{\text{Levi}}(\wt{W}^{\lambda}_{j, N+1}, \ol{\wt{W}^{\mu}_{k, 1}})+\dots
    +\widetilde{\text{Levi}}(\wt{W}^{\lambda}_{j, N+1}, \ol{\wt{W}^{\mu}_{k, N}}).   \nn
\end{align}
For $i=1, \dots, N$, let us denote by 
\begin{align}
&\wt{W}^{\lambda}_{j, i}=a^{1}_{i}\wt{W}^{1}_{i}+\dots+
     a^{n_{i}-1}_{i}\wt{W}^{n_{i}-1}_{i},  \quad 
\wt{W}^{\mu}_{k, i}=b^{1}_{i}\wt{W}^{1}_{i}+\dots+
     b^{n_{i}-1}_{i}\wt{W}^{n_{i}-1}_{i}, \nn  \\
&\wt{W}^{\lambda}_{j, N+1}=A_1\wt{E}_1+\dots+A_N\wt{E}_N,  \quad 
\wt{W}^{\mu}_{k, N+1}=B_1\wt{E}_1+\dots+B_N\wt{E}_N,  \nn 
\end{align}
then it follows from (\ref{levical}) that each term on the right hand side of (\ref{leviexp}) 
is calculated as follows: 
\begin{align}
&\wt{\text{Levi}}(\wt{W}^{\lambda}_{j, i}, \ol{\wt{W}^{\mu}_{k, i}})
=-2\beta_{i}||\wt{w}_{i}||^{2(\beta_{i}-1)}
\sum_{\nu_1, \nu_2=1}^{n_{i}-1}a^{\nu_1}_{i}\ol{b^{\nu_2}_{i}}
\dfrac{\ol{\wt{w}^{\nu_1}_{i}} \wt{w}^{\nu_2}_{i}}
      {||\widetilde{w}_{i}||^2}, \; i=1, \dots, N,   \nn  \\
&\wt{\text{Levi}}(\wt{W}^{\lambda}_{j, N+1}, \ol{\wt{W}^{\mu}_{k, N+1}})
=2(A_1\ol{B_1}||\wt{w}_1||^{2\beta_1}+\dots+A_N\ol{B_N}||\wt{w}_N||^{2}),   \nn \\
&\wt{\text{Levi}}(\wt{W}^{\lambda}_{j, i}, \ol{\wt{W}^{\mu}_{k, N+1}})=
\widetilde{\text{Levi}}(\wt{W}^{\lambda}_{j, N+1}, \ol{\wt{W}^{\mu}_{k, i}})=0, \; i=1, \dots, N.  \nn
\end{align}
Hence we obtain 
\begin{equation}
-\sum^{N}_{i=1}2\beta_{i}||\wt{w}_{i}||^{2(\beta_{i}-1)}
\sum_{\nu_1, \nu_2=1}^{n_{i}-1}a^{\nu_1}_{i}\ol{b^{\nu_2}_{i}}
\dfrac{\ol{\wt{w}^{\nu_1}_{i}} \wt{w}^{\nu_2}_{i}}
      {||\widetilde{w}_{i}||^2}
+2\sum^{N}_{i=1}A_{i}\ol{B_i}||\wt{w}_i||^{2\beta_i}=0, \nn
\end{equation}
which means that for each $i=1, \dots, N$, 
\begin{align}
&a^{1}_{i}\ol{b^{1}_{i}}=\dots=a^{1}_{i}\ol{b^{n_{i}-1}_{i}}=0, \nn\\
& \qquad \vdots \nn \\
&a^{n_{i}-1}_{i}\ol{b^{1}_{i}}=\dots=a^{n_{i}-1}_{i}\ol{b^{n_{i}-1}_{i}}
=0.  \nn
\end{align}
Take any $i=1, \dots, N$ and fix it. 
If $a^{\lambda}_{i}=0$ for any $\lambda=1, \dots, n_{i}-1$, then 
$\wt{W}^{\lambda}_{j, i}=0$; and hence, the $\tldW_i$-component of $\Phi_{\ast}W^{\lambda}_{j}$ 
is equal to zero. 
If there exists $\lambda=1, \dots, n_{i}-1$ such that 
$a^{\lambda}_{i}\neq0$, then 
$b^{1}_{i}=\dots=b^{n_{i}-1}_{i}=0$ and 
this means that 
the $\tldW_i$-component of 
$\Phi_{\ast}W^{\mu}_{k}$ is equal to zero. 
Hence both $\Phi_{\ast}W^{\lambda}_{j}$ and $\Phi_{\ast}W^{\mu}_{k}$ do not have 
the $\tldW_i$-component simultaneously. 
\end{proof}
The same conclusion holds for 
$\Phi_{\ast}\mathcal{E}$ and $\Phi_{\ast}\mathcal{W}_j$. 
\begin{lemma}
For any $j=1, \dots, N$, we have $J_j \cap J_{N+1}=\emptyset$. 
\end{lemma}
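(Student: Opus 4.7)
The plan is to mimic the proof of Lemma~\ref{indexdecomposition}, with the second vector field $W^\mu_k$ replaced by $E_k$ for an arbitrary $k\in\{1,\dots,N\}$. The key input from (\ref{levical}) is the identity $\text{Levi}(W^\lambda_j,\ol{E_k})=0$ for all admissible indices, which together with the CR condition $\Phi^\ast \wt{\vartheta}=f\vartheta$ gives
\[
\wt{\text{Levi}}(\Phi_\ast W^\lambda_j,\ol{\Phi_\ast E_k})=f\,\text{Levi}(W^\lambda_j,\ol{E_k})=0.
\]

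Next I would expand both pushforwards in the target decomposition $T^{1,0}\pa^\ast \Hn=\tldW_1\oplus\dots\oplus\tldW_N\oplus\wt{\mathcal{E}}$, writing $\Phi_\ast W^\lambda_j=\sum_{i=1}^{N}\wt{W}^\lambda_{j,i}+\wt{W}^\lambda_{j,N+1}$ and $\Phi_\ast E_k=\sum_{i=1}^{N}\wt{W}_{N+1,i}+\wt{W}_{N+1,N+1}$ using the notation fixed before Lemma~\ref{indexdecomposition}. The orthogonality relations $\wt{\text{Levi}}(\tldW_i,\ol{\tldW_{i'}})=0$ for $i\ne i'$ and $\wt{\text{Levi}}(\tldW_i,\ol{\wt{\mathcal{E}}})=0$, read off from the target version of (\ref{levical}), kill every off-diagonal cross term in the resulting bilinear expansion and reduce it to
\[
\sum_{i=1}^{N}\wt{\text{Levi}}(\wt{W}^\lambda_{j,i},\ol{\wt{W}_{N+1,i}})+\wt{\text{Levi}}(\wt{W}^\lambda_{j,N+1},\ol{\wt{W}_{N+1,N+1}})=0.
\]

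Writing each component in the bases $\wt{W}^\nu_i$ and $\wt{E}_i$ and substituting the explicit Levi values from (\ref{levical}) produces a structural equation identical to the one in the proof of Lemma~\ref{indexdecomposition}, with the coefficients $b^\nu_i$ and $B_i$ of $\Phi_\ast W^\mu_k$ simply replaced by the analogous coefficients of $\Phi_\ast E_k$. The block-by-block dichotomy established there therefore transfers verbatim: for each $i=1,\dots,N$, either the $\tldW_i$-component of $\Phi_\ast W^\lambda_j$ vanishes, or the $\tldW_i$-component of $\Phi_\ast E_k$ vanishes. Letting $\lambda$ (and $k$) range over all admissible values yields $J_j\cap J_{N+1}=\emptyset$.

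I do not anticipate any genuinely new obstacle. The entire argument is a transcription of Lemma~\ref{indexdecomposition}; the only substantive change is the replacement of the identity $\text{Levi}(W^\lambda_j,\ol{W^\mu_k})=0$ (for $j\ne k$) by $\text{Levi}(W^\lambda_j,\ol{E_k})=0$, both of which are supplied by (\ref{levical}). Some minor bookkeeping is required because $E_k$ is a single vector rather than a free generator of a multi-dimensional bundle component, but this does not affect the structure of the calculation.
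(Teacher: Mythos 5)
Your proposal is correct and is essentially the paper's own proof: the paper simply writes out the two expansions of $\Phi_\ast E_k$ and $\Phi_\ast W^\lambda_j$ and then states that the lemma "follows by the same argument as in Lemma~\ref{indexdecomposition}," which is exactly the substitution of $\mathrm{Levi}(W^\lambda_j,\ol{E_k})=0$ for $\mathrm{Levi}(W^\lambda_j,\ol{W^\mu_k})=0$ that you carry out. No further comment is needed.
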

\begin{proof}
Take any $k=1, \dots, N$ and fix it. 
Let us denote by 
\begin{align}
&\Phi_{\ast}E_k=\wt{W}_{N+1, 1}+\dots+\wt{W}_{N+1, N}+\wt{W}_{N+1, N+1}, \nn \\
&\Phi_{\ast}W^{\lambda}_{j}=\wt{W}^{\lambda}_{j, 1}+\dots+\wt{W}^{\lambda}_{j, N}
+\wt{W}^{\lambda}_{j, N+1}.  \nn  
\end{align}
Lemma follows by the same argument as in Lemma~{\ref{indexdecomposition}}. 
\end{proof}

\begin{lemma}\label{nowe}
There does not exist 
$W^{\lambda}_{j} \in \mathcal{W}_j, \;  j=1, \dots, N $ such that 
$\Phi_{\ast}W^{\lambda}_{j} \in \wt{\mathcal{E}}$.  
\end{lemma}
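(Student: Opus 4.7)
The plan is to argue by contradiction in the same spirit as Lemma~\ref{indexdecomposition}: compute a single Levi form in two different ways, using the CR transformation law $\Phi^{\ast}\wt\vartheta = f\vartheta$ (with $f>0$) on one side and the hypothesized decomposition on the other. The structural fact from (\ref{levical}) that I would exploit is a sign mismatch---the Levi form of a vector in $\W_j$ paired with itself is $\leq 0$, while the Levi form of any vector in $\wt{\mathcal{E}}$ paired with itself is $\geq 0$.

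Concretely, suppose that at some point of $\pa^{\ast}\Hm$ with $w^{\lambda}_j \neq 0$ we have $\Phi_{\ast}W^{\lambda}_{j}\in\wt{\mathcal{E}}$, and expand $\Phi_{\ast}W^{\lambda}_{j}=A_1\wt{E}_1+\cdots+A_N\wt{E}_N$. On one hand, CR invariance of the Levi form yields
\[
\wt{\text{Levi}}(\Phi_{\ast}W^{\lambda}_{j},\ol{\Phi_{\ast}W^{\lambda}_{j}}) = f\,\text{Levi}(W^{\lambda}_{j},\ol{W^{\lambda}_{j}}) = -2f\alpha_j||w_j||^{2(\alpha_j-1)}\frac{|w^{\lambda}_j|^2}{||w_j||^2} < 0
\]
by the first identity in (\ref{levical}). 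On the other hand, the orthogonality $\wt{\text{Levi}}(\wt{E}_i,\ol{\wt{E}_k})=0$ for $i\neq k$ together with the diagonal values $\wt{\text{Levi}}(\wt{E}_i,\ol{\wt{E}_i})=2||\wt{w}_i||^{2\beta_i}$ from (\ref{levical}) give
\[
\wt{\text{Levi}}(\Phi_{\ast}W^{\lambda}_{j},\ol{\Phi_{\ast}W^{\lambda}_{j}}) = 2\sum_{i=1}^{N}|A_i|^2||\wt{w}_i||^{2\beta_i} \geq 0.
\]
A strictly negative quantity cannot equal a non-negative one, so no such $W^{\lambda}_j$ can exist.

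The only minor point needing attention is the choice of index $\lambda$: on $\pa^{\ast}\Hm$ one has $w_j\neq 0$, so at least one coordinate $w^{\lambda}_j$ is nonzero, and the pointwise contradiction at such generic boundary points, together with real-analyticity of $\Phi$, delivers the bundle-level conclusion. I do not expect a genuine obstacle here; the entire content of the lemma is the sign discrepancy between the Levi form restricted to $\W_j$ and the Levi form restricted to $\wt{\mathcal{E}}$, and the argument is a one-line consequence of the calculations already tabulated in (\ref{levical}).
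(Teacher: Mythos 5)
Your argument hinges on a sign discrepancy that does not actually exist: it would force the Levi form to be negative on $\W_j$ and positive on $\wt{\mathcal{E}}$ simultaneously, which is impossible on the strongly pseudoconvex locus $\pa^{\ast}\Hn$, where $-id\wt{\vartheta}$ is sign-definite on all of $T^{1,0}$. The formula in (\ref{levical}) that you quote is not consistent with the paper's own expression for $\vartheta([L^{\lambda}_{j}, \ol{L^{\mu}_{j}}])$; carrying out the substitution $W^{\lambda}_{j}=\sum_{\nu}(\delta^{\lambda}_{\nu}-\ol{w^{\lambda}_{j}}w^{\nu}_{j}/||w_j||^2)L^{\nu}_{j}$ into $\mathrm{Levi}(L^{\nu}_{j},\ol{L^{\mu}_{j}})=2\alpha_j||w_j||^{2(\alpha_j-1)}\{\delta^{\mu}_{\nu}+(\alpha_j-1)\ol{w^{\nu}_{j}}w^{\mu}_{j}/||w_j||^2\}$ gives
\begin{equation}
\mathrm{Levi}(W^{\lambda}_{j}, \ol{W^{\mu}_{j}})
=2\alpha_j||w_j||^{2(\alpha_j-1)}\Bigl(\delta^{\lambda}_{\mu}-\dfrac{\ol{w^{\lambda}_{j}}w^{\mu}_{j}}{||w_j||^{2}}\Bigr), \nn
\end{equation}
a positive semidefinite matrix (a multiple of the projection orthogonal to $w_j$); the displayed identity in (\ref{levical}) is missing the $\delta^{\lambda}_{\mu}$ term. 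With the corrected value, your two computations read $f\cdot 2\alpha_j||w_j||^{2(\alpha_j-1)}(1-|w^{\lambda}_{j}|^2/||w_j||^2)\geq 0$ on one side and $2\sum_i|A_i|^2||\wt{w}_i||^{2\beta_i}\geq 0$ on the other, and there is no contradiction. More fundamentally, no pointwise Levi-form comparison can prove this lemma: the Levi form is positive definite on each summand of the orthogonal decomposition $\tldW_1\oplus\dots\oplus\tldW_N\oplus\wt{\mathcal{E}}$ at strongly pseudoconvex points, so there is no linear-algebraic obstruction to a vector of $\W_j$ being pushed forward into $\wt{\mathcal{E}}$.

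The actual obstruction is analytic, and this shows in the fact that your argument never uses the hypothesis of the Main Theorem that the components $\mathcal{G}_\ell$ are nonconstant, whereas the paper's proof must. The paper expands $G^{\mu}_{\ell}$ as a power series in $(w)^p(x+i|||w|||^{2\alpha})^q$, writes the hypothesized relation as $W^{\lambda}_{j}G^{\mu}_{\ell}=(A^{\lambda}_{\ell}/\beta_{\ell})G^{\mu}_{\ell}$, and by comparing the $\ol{w^{\lambda}_{j}}$-terms and the monomials $(w)^p$ grouped by the support of $p_j$ concludes first that $G^{\mu}_{\ell}$ is independent of $w_j$, and then, feeding this back into the same relation with $A^{\lambda}_{\ell}\neq 0$, that $G^{\mu}_{\ell}\equiv 0$ for all $\mu$ --- contradicting nonconstancy. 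You would need to reconstruct an argument of that type (or invoke Corollary~\ref{f0}, which the paper extracts from exactly this computation); the sign-comparison shortcut is not available.
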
 

\begin{proof}
We use the expansion of $G^{\mu}_{\ell}$: 
\begin{equation}
G^{\mu}_{\ell}(x, w)=\sum_{|p|+q \geq 0}b^{\mu}_{\ell, p, q}(w)^p
   (x+i|||w|||^{2\alpha})^q.  \nn
\end{equation} 
Suppose that there exists $W^{\lambda}_{j} \in \mathcal{W}_j$ 
such that $\Phi_{\ast}W^{\lambda}_{j} \in \wt{\mathcal{E}}$ 
and express it as 
$\Phi_{\ast}W^{\lambda}_{j}=A^{\lambda}_1\wt{E}_1+\dots+A^{\lambda}_N\wt{E}_N$.  
Take $\ell=1, \dots, N$ such that $A^{\lambda}_{\ell} \neq 0$. 
Apply $\wt{w}^{\mu}_{\ell}=G^{\mu}_{\ell}$ to this relation to get the following:  
\begin{align}
\dfrac{A^{\lambda}_{\ell}}{\beta_{\ell}}G^{\mu}_{\ell}=
W^{\lambda}_{j}G^{\mu}_{\ell}
&=\sum^{m_j}_{\nu=1}\left(\delta^{\nu}_{\lambda}-\dfrac{\ol{w^{\lambda}_{j}}w^{\nu}_{j}}{||w_j||^2}\right) 
   L^{\nu}_{j}G^{\mu}_{\ell}   \label{wg}  \\
&=\sum_{\substack{|p|+q \geq 0 \\ p^{\lambda}_{j}\geq 1}}
b^{\mu}_{\ell, p, q}\dfrac{\pa (w)^{p}}{\pa w^{\lambda}_{j}}(x+i|||w|||^{2\alpha})^q  \nn \\
& \qquad -\dfrac{\ol{w^{\lambda}_{j}}}{||w_j||^2}
\sum^{m_j}_{\nu=1}\sum_{\substack{|p|+q \geq 0 \\ p^{\nu}_j\geq 1}}
b^{\mu}_{\ell, p, q}p^{\nu}_j(w)^p
(x+i|||w|||^{2\alpha})^q. \nn 
\end{align}
For convenience, we want to rewrite the last summation in (\ref{wg}) as follows. 
To this end, let us introduce a subset $P_k$ of $\mathbb{Z}^{m_{j}}$ defined by 
\begin{align}
P_k=\{(p^{1}_{j}, \dots, p^{m_j}_j) \in \mathbb{Z}^{m_j} : \;
&k \; \text{components among} \; p^{1}_{j}, \dots, p^{m_j}_{j} \; 
\text{are positive and }  \nn \\
&\text{the rest are zero} \}.   \nn
\end{align} 
Take any $p^0_j \in P_{k}$ such that 
its $\lambda_1$-th, $\dots, \lambda_{k}$-th 
components are positive and the others are zero. 
Then, among $m_j$ summations in the second term on the far right hand side of (\ref{wg}), 
\begin{equation}
\sum_{\substack{|p|+q \geq 0 \\ p^{1}_j\geq 1}}
b^{\mu}_{\ell, p, q}p^{1}_j(w)^p
(x+i|||w|||^{2\alpha})^q, \dots, 
\sum_{\substack{|p|+q \geq 0 \\ p^{m_j}_j\geq 1}}
b^{\mu}_{\ell, p, q}p^{m_j}_j(w)^p
(x+i|||w|||^{2\alpha})^q, \nn
\end{equation}
the term of the form: 
\begin{equation}
\sum_{\substack{|p|+q \geq 0 \\ p_j=p^{0}_{j}}}  \nn
b^{\mu}_{\ell, p, q}p^{\nu}_j(w)^p
(x+i|||w|||^{2\alpha})^q
\end{equation}
comes only from 
the $\lambda_1$-th, $\dots$, the $\lambda_k$-th summations. 
Thus (\ref{wg}) can be rewritten as  
\begin{align}
\dfrac{A^{\lambda}_{\ell}}{\beta_{\ell}}G^{\mu}_{\ell}=
&\sum_{\substack{|p|+q \geq 0 \\ p^{\lambda}_{j}\geq 1}}
b^{\mu}_{\ell, p, q}\dfrac{\pa (w)^{p}}{\pa w^{\lambda}_{j}}(x+i|||w|||^{2\alpha})^q  \nn \\
&\qquad -\dfrac{\ol{w^{\lambda}_{j}}}{||w_j||^2}
\Bigl\{m_j\sum_{\substack{|p|+q \geq 0 \\ p_j \in P_{m_j}}}
b^{\mu}_{\ell, p, q}|p_j|(w)^p (x+i|||w|||^{2\alpha})^q \nn \\
& \qquad \qquad +\dots+
(m_j-k)
\sum_{\substack{|p|+q\geq 0 \\ p_j \in P_{m_j-k}}}
b^{\mu}_{\ell, p, q}|p_j|(w)^p(x+i|||w|||^{2\alpha})^q \nn \\
&\qquad \qquad +\dots+
\sum_{\substack{|p|+q \geq 0 \\ p_j \in P_1}}
b^{\mu}_{\ell, p, q}|p_j|(w)^p(x+i|||w|||^{2\alpha})^q \Bigr\}.  \nn
\end{align} 
Hence comparing the $\ol{w^{\lambda}_{j}}$ terms and the $(w)^p$ terms with 
$p_j \in P_{m_j-k}$, 
we have 
\begin{equation}
\sum_{\substack{|p|+q \geq 0 \\ p_j \in P_{m_j-k}}}b^{\mu}_{\ell, p, q}|p_j|(w)^p
(x+i|||w|||^{2\alpha})^q=0, \quad  k=0, \dots, m_j-1 \nn
\end{equation}
and thus $G^{\mu}_{\ell}$ has an expansion 
\begin{equation}
G^{\mu}_{\ell}(x, w)=\sum_{\substack{|p|+q \geq 0 \\ p_j=0}}b^{\mu}_{\ell, p, q}(w)^p
   (x+i|||w|||^{2\alpha})^q.  \nn
\end{equation} 
This means that the variables $w_j$ do not appear in $(w)^p$. 
Therefore again by the relation (\ref{wg}) and by $A^{\lambda}_{\ell} \neq 0$, 
we conclude that $G^{\mu}_{\ell}=0$ 
for any $\mu=1, \dots n_{\ell}$. This contradicts the assumption on the main theorem.   
\end{proof}

From the proof of this lemma, the following corollary holds, which will be used 
in the next argument.  
\begin{corollary}\label{f0}
Let $f$ be a CR function on $\pa \Hm$. 
Assume that there exist $W^{\lambda}_{j} \in \mathcal{W}_j$ and a constant $A$ 
such that $W^{\lambda}_{j}f=Af$. 
If $A\neq 0$, then $f\equiv0$ and if $A=0$, then 
$f$ has an expansion: 
\begin{equation}
f=\sum_{\substack{|p|+q \geq 0 \\ p_j=0}} a_{p, q}(w)^p(x+i|||w|||^{2\alpha})^q. \nn
\end{equation}
\end{corollary}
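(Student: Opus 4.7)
The plan is to mimic the computation in the proof of Lemma~\ref{nowe} almost verbatim, with the role of $G^\mu_\ell$ now played by a general CR function $f$. First I would expand $f$ on $\pa\Hm$ in its CR power series
$f(x,w)=\sum_{|p|+q\geq 0}a_{p,q}(w)^p(x+i|||w|||^{2\alpha})^q$,
and then compute $W^\lambda_j f$ by plugging (\ref{wexp}) into the definition of $W^\lambda_j$ and applying it term by term, exactly as in the display containing (\ref{wg}). The result splits into a ``holomorphic'' part, coming from $\pa/\pa w^\lambda_j$ hitting $(w)^p$, plus an ``antiholomorphic'' part of the form $-\frac{\ol{w^\lambda_j}}{||w_j||^2}\sum_{|p|+q\geq 0}a_{p,q}|p_j|(w)^p(x+i|||w|||^{2\alpha})^q$.

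The second step is to equate $W^\lambda_j f=Af$ and isolate the $\ol{w^\lambda_j}/||w_j||^2$ contributions. Neither $Af$ nor the holomorphic part on the left carries this distinguished prefactor, so stratifying the multi-indices $p_j$ by the subsets $P_k$ introduced in Lemma~\ref{nowe} and matching the $\ol{w^\lambda_j}$-shells one at a time forces $a_{p,q}|p_j|=0$ for every $p$ with $p_j\neq 0$; equivalently, $a_{p,q}=0$ whenever $p_j\neq 0$. If $A=0$, this is already the expansion claimed. If $A\neq 0$, I would substitute the constraint $a_{p,q}=0$ for $p_j\neq 0$ back into the identity: the remaining sum on the left is indexed by $p^\lambda_j\geq 1$, hence by $p_j\neq 0$, and is therefore empty. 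The identity then reads $Af\equiv 0$, which together with $A\neq 0$ yields $f\equiv 0$.

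The only delicate point is the separation of ``antiholomorphic'' from ``holomorphic'' contributions, since the factor $(x+i|||w|||^{2\alpha})^q$ itself contains $\ol{w_j}$'s through $||w_j||^{2\alpha_j}$. This is precisely what the $P_k$-bookkeeping from the proof of Lemma~\ref{nowe} is designed to handle, and I would reuse that argument without change; no new estimate is needed for the corollary.
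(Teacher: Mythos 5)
Your proposal is correct and follows exactly the route the paper intends: the paper offers no separate proof of Corollary~\ref{f0}, stating only that it follows from the proof of Lemma~\ref{nowe}, and your argument is precisely that proof with $G^{\mu}_{\ell}$ replaced by $f$ and $A^{\lambda}_{\ell}/\beta_{\ell}$ by $A$ — including the $P_k$-stratification to handle the $\ol{w^{\lambda}_{j}}/||w_j||^2$ terms and the final observation that once $a_{p,q}=0$ for $p_j\neq 0$ the identity collapses to $Af\equiv 0$.
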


From Lemma~{\ref{indexdecomposition}} to Lemma~{\ref{nowe}}, 
we have proved that there exists a permutation $\sigma$ of $\{1, \dots, N\}$ 
such that 
$\Phi_{\ast}\mathcal{W}_{\sigma(j)} \subset \tldW_j \oplus \wt{\mathcal{E}}$. 
Here we assert that  
$\Phi_{\ast}\mathcal{W}_{\sigma(j)} \subset \tldW_j$ for all $j=1, \dots, N$. 
We now verify this only in the case where $j=1$, since the verification in the general case 
is almost identical. 
So, take any $W^{\lambda}_{\sigma(1)} \in \mathcal{W}_{\sigma(1)}$ and write 
\begin{equation}
\Phi_{\ast}W^{\lambda}_{\sigma(1)}=a^{\lambda}_1\wt{W}^{1}_{1}+\dots+a^{\lambda}_{n_1-1}\wt{W}^{n_1-1}_{1}
+A^{\lambda}_1\wt{E}_1+\dots+A^{\lambda}_N\wt{E}_N.  \label{wwe}
\end{equation}
Assume that there exist $\lambda=1, \dots, m_{\sigma(1)}$ and $\ell=2, \dots, N$ 
such that $A^{\lambda}_{\ell} \neq 0$. 
Apply $\wt{w}^{\mu}_{\ell}=G^{\mu}_{\ell}$ to (\ref{wwe}) to get 
$W^{\lambda}_{\sigma(1)}G^{\mu}_{\ell}=(A^{\lambda}_{\ell}/\beta_{\ell})G^{\mu}_{\ell}$. 
Then by Corollary~{\ref{f0}}, $G^{\mu}_{\ell}=0$ for any $\mu=1, \dots, n_{\ell}$, 
which is a contradiction. 
Thus $A^{\lambda}_{\ell}=0$ for any  $\lambda=1, \dots, m_{\sigma(1)}$ and $\ell=2, \dots, N$. 
Among $A^{1}_{1}, \dots, A^{m_{\sigma(1)}}_{1}$, 
assume that $A^{\lambda_1}_{1}, \dots, A^{\lambda_i}_{1} \neq 0$ 
and the others are zero. 
Apply $\wt{x}=(F+\ol{F})/2$ to (\ref{wwe}) to get 
$(1/2)W^{\lambda}_{\sigma(1)}F=iA^{\lambda}_{1}||G_1||^{2\beta_1}$. 
Since $F$ has an expansion
\begin{equation}
F=\sum_{|p|+q \geq 0}a_{p, q}(w)^p(x+i|||w|||^{2\alpha})^q,   \label{fwx}
\end{equation}
we get 
\begin{align}
&\dfrac{1}{2}\sum_{\substack{|p|+q \geq 0 \\ p^{\lambda}_{\sigma(1)} \geq 1}}a_{p, q}
\dfrac{\pa (w)^p}{\pa w^{\lambda}_{\sigma(1)}}(x+i|||w|||^{2\alpha})^q  \label{anylambda} \\
&-\dfrac{1}{2}\dfrac{\ol{w^{\lambda}_{\sigma(1)}}}{||w_{\sigma(1)}||^2}
\sum^{m_{\sigma(1)}}_{\nu=1}
\sum_{\substack{|p|+q \geq 0 \\ p^{\nu}_{\sigma(1)}\geq 1}}a_{p, q}p^{\nu}_{\sigma(1)}
(w)^p(x+i|||w|||^{2\alpha})^q  \nn \\
&=iA^{\lambda}_{1}||G_1||^{2\beta_1}  \nn
\end{align}
for any $\lambda=1, \dots, m_{\sigma(1)}$. 
Multiply (\ref{anylambda}) by $w^{\lambda}_{\sigma(1)}$ and add them from 
$\lambda=1$ to $\lambda=m_{\sigma(1)}$.  
Then we have 
\begin{equation}
0=i||G_1||^{2\beta_1}\{A^{\lambda_1}_{1}w^{\lambda_1}_{\sigma(1)}+\dots+
             A^{\lambda_i}_{1}w^{\lambda_i}_{\sigma(1)}\}  \nn
\end{equation}
and this means that $G_1=0$. This is a contradiction. 
Thus $A^{\lambda}_{1}=0$ for all $\lambda$. 
Hence, the $\wt{\mathcal{E}}$-component of $\Phi_{\ast}W^{\lambda}_{\sigma(1)}$ 
is equal to zero and 
$\Phi_{\ast}\mathcal{W}_{\sigma(1)} \subset \tldW_1$, as asserted. 
By the same argument as above, it can be seen that 
$\Phi_{\ast}\mathcal{E} \subset \wt{\mathcal{E}}$. 
Summarizing the above, we obtain the following: 
\begin{proposition}\label{sigmae}
There exists a permutation $\sigma$ of $\{1, \dots, N\}$ such that 
$\Phi_{\ast}\mathcal{W}_{\sigma(j)} \subset \tldW_j$ for all $j=1, \dots, N$; and 
$\Phi_{\ast}\mathcal{E} \subset \wt{\mathcal{E}}$. 
\end{proposition}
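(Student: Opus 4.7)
The plan is to combine the three preceding lemmas with a careful coefficient analysis. First, Lemmas \ref{indexdecomposition} and the subsequent lemma together say that the sets $J_1, \dots, J_{N+1}$ are pairwise disjoint. Since each $J_i \subset \{1, \dots, N\}$ and there are $N+1$ indices, one of them (I would like to argue, namely $J_{N+1}$) must coincide with $\{1, \dots, N\}$; for the remaining $N$ indices, $J_1, \dots, J_N$ form a partition of $\{1, \dots, N\}$ into singletons. By Lemma~\ref{nowe}, for each $i \in \{1, \dots, N\}$ there is some $W^{\lambda}_i$ whose image has a nonzero $\tldW_\ast$-component, so each $J_i$ misses at least one index. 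Together with the disjointness, this forces $|J_i^c \cap \{1,\dots,N\}| = 1$ for every $i \in \{1, \dots, N\}$, giving a bijection $\sigma : \{1, \dots, N\} \to \{1, \dots, N\}$ for which $\Phi_{\ast}\mathcal{W}_{\sigma(j)} \subset \tldW_j \oplus \wt{\mathcal{E}}$.

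The next step is to upgrade this inclusion by eliminating the $\wt{\mathcal{E}}$-component; this is the main obstacle. I would focus on $\sigma(1)$ for notational convenience and write an arbitrary vector in $\Phi_{\ast}\mathcal{W}_{\sigma(1)}$ as
\begin{equation}
\Phi_{\ast}W^{\lambda}_{\sigma(1)}=\sum_{\nu=1}^{n_1-1}a^{\lambda}_{\nu}\wt{W}^{\nu}_{1}+\sum_{\ell=1}^{N}A^{\lambda}_{\ell}\wt{E}_\ell. \nonumber
\end{equation}
For $\ell \geq 2$, applying this to $\wt{w}^{\mu}_{\ell}=G^{\mu}_{\ell}$ yields $W^{\lambda}_{\sigma(1)}G^{\mu}_{\ell}=(A^{\lambda}_{\ell}/\beta_\ell)G^{\mu}_{\ell}$, so Corollary \ref{f0} forces either $A^{\lambda}_{\ell}=0$ or $G^{\mu}_{\ell}\equiv 0$; the latter contradicts the non-constancy hypothesis in the Main Theorem, hence $A^{\lambda}_{\ell}=0$ for all $\ell \geq 2$.

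For $\ell = 1$ the coefficient cannot be killed directly by applying to $G^\mu_1$ (that would only confirm the consistent inclusion into $\tldW_1$), so I instead apply the identity to $\wt{x}=(F+\ol{F})/2$, obtaining $(1/2)W^{\lambda}_{\sigma(1)}F=iA^{\lambda}_{1}||G_1||^{2\beta_1}$. Using the expansion of $F$ in (\ref{fwx}), multiplying the resulting identity by $w^{\lambda}_{\sigma(1)}$ and summing over $\lambda=1,\dots,m_{\sigma(1)}$ collapses the left-hand side (by Euler-type cancellation analogous to the one performed in the proof of Lemma \ref{nowe}) to zero, yielding
\begin{equation}
0=i\|G_1\|^{2\beta_1}\sum_{\lambda=1}^{m_{\sigma(1)}}A^{\lambda}_{1}\,w^{\lambda}_{\sigma(1)}. \nonumber
\end{equation}
If some $A^{\lambda}_{1}\neq 0$, this forces $G_1\equiv 0$, again contradicting non-constancy. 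Hence all $A^{\lambda}_{\ell}=0$, and $\Phi_{\ast}\mathcal{W}_{\sigma(1)} \subset \tldW_1$.

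The same argument applied to each $\sigma(j)$ handles every block, establishing the first half of the statement. For the second half, $\Phi_{\ast}\mathcal{E} \subset \wt{\mathcal{E}}$, the disjointness of the $J_i$'s already forces $\Phi_{\ast}E_k$ to lie in $\wt{\mathcal{W}}_{\sigma(0)} \oplus \wt{\mathcal{E}}$ for some leftover slot; but since all $\tldW_j$ slots are occupied by $\Phi_{\ast}\mathcal{W}_{\sigma(j)}$, and Lemma~\ref{indexdecomposition} type disjointness is symmetric, the only possibility left for $\Phi_{\ast}E_k$ is the $\wt{\mathcal{E}}$ slot, giving $\Phi_{\ast}\mathcal{E} \subset \wt{\mathcal{E}}$ and completing the proof.
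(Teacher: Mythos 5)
Your analytic core is exactly the paper's argument and it is correct: for $\ell\ge 2$ you apply $\Phi_{\ast}W^{\lambda}_{\sigma(1)}$ to $\wt{w}^{\mu}_{\ell}=G^{\mu}_{\ell}$, get $W^{\lambda}_{\sigma(1)}G^{\mu}_{\ell}=(A^{\lambda}_{\ell}/\beta_{\ell})G^{\mu}_{\ell}$, and invoke Corollary~\ref{f0} against non-constancy; for $\ell=1$ you apply it to $\wt{x}=(F+\ol{F})/2$, multiply by $w^{\lambda}_{\sigma(1)}$ and sum over $\lambda$ so that the relation $\sum_{\lambda}w^{\lambda}_{\sigma(1)}W^{\lambda}_{\sigma(1)}=0$ kills the left-hand side and forces $G_1\equiv 0$ unless all $A^{\lambda}_{1}=0$. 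Your derivation of $\Phi_{\ast}\mathcal{E}\subset\wt{\mathcal{E}}$ from the disjointness (all target slots already occupied) is also fine, and is in fact more explicit than the paper's ``by the same argument.''

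The one genuine defect is your opening combinatorial paragraph, which is self-contradictory as written: if $J_1,\dots,J_{N+1}$ are pairwise disjoint and $J_{N+1}=\{1,\dots,N\}$, then $J_1,\dots,J_N$ are all empty, not singletons; moreover ``$N+1$ pairwise disjoint subsets of an $N$-element set'' forces at least one of them to be \emph{empty}, not to be everything. Part of the confusion is inherited from the paper, whose displayed definition of $J_i$ (``component \emph{equal} to zero'') is inconsistent with how Lemma~\ref{indexdecomposition} uses it; the workable reading is that $J_i$ is the set of $\ell$ for which the $\tldW_{\ell}$-component of $\Phi_{\ast}\mathcal{W}_i$ is \emph{nonzero}. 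With that reading the correct count is: $J_1,\dots,J_N$ are pairwise disjoint (Lemma~\ref{indexdecomposition}) and nonempty (Lemma~\ref{nowe}), hence singletons partitioning $\{1,\dots,N\}$, which defines $\sigma$ and simultaneously forces $J_{N+1}=\emptyset$. Replace your first paragraph by that sentence and the rest of your proof stands and coincides with the paper's.
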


\section{Expansions of $F$ and $G$}
Using Proposition~{\ref{sigmae}}, we can characterize the expansions of 
$F$ and $G^{\lambda}_{j}$. 

\begin{lemma}\label{fgex}
Up to an automorphism of $\Hn$, 
the mapping $(F, G)$ has the following expansions  on $\pa \Hm$: 
\begin{align}
&F(x, w)=x+i|||w|||^{2\alpha},    \label{fx} \\
&G^{\mu}_{j}(w)
=\sum_{|p_{\sigma(j)}| \geq 1}b^{\mu}_{j, p_{\sigma(j)}}(w_{\sigma(j)})^{p_{\sigma(j)}}, \; j=1, \dots, N. 
\label{gw}
\end{align}
\end{lemma}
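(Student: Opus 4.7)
Plan.

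My plan has three stages, using Proposition~\ref{sigmae} as the main structural input. \textbf{First}, a direct computation using (\ref{wexp}) together with $L^{\mu}_j z = 2i\alpha_j\|w_j\|^{2(\alpha_j-1)}\overline{w^{\mu}_j}$ yields $W^{\lambda}_j z = 0$ on $\partial \Hm$; analogously $\wt{W}^{\mu}_k\wt{z} = 0$, and directly from the formula for $\wt{W}^{\mu}_k$ in terms of $\wt{L}^{\rho}_k$ we also have $\wt{W}^{\mu}_k \wt{w}^{\nu}_{k'} = 0$ whenever $k' \neq k$. Since $\Phi_{\ast}W^{\lambda}_{\sigma(j)} \in \wt{\mathcal{W}}_j$ by Proposition~\ref{sigmae}, the chain rule gives
\begin{equation*}
W^{\lambda}_{\sigma(j)}F \;=\; (\Phi_{\ast}W^{\lambda}_{\sigma(j)})\wt{z} \;=\; 0 \qquad \text{and} \qquad W^{\lambda}_{\sigma(j)}G^{\mu}_k \;=\; (\Phi_{\ast}W^{\lambda}_{\sigma(j)})\wt{w}^{\mu}_k \;=\; 0 \quad (k \neq j).
\end{equation*}
Corollary~\ref{f0} with $A = 0$, applied to each of these CR functions, produces expansions with $p_{\sigma(j)} = 0$; because $\sigma$ is a permutation of all $N$ blocks, $F$ has no $w$-dependence, so $F = F(z)$, and $G^{\mu}_k$ has no $w_\ell$ for $\ell \neq \sigma(k)$, so $G^{\mu}_k = G^{\mu}_k(z, w_{\sigma(k)})$.

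\textbf{Second}, I would normalize $F$ to the identity. The map $F : \mathbb{H} \to \mathbb{H}$ (where $\mathbb{H}$ is the upper $z$-half plane) is holomorphic, non-constant, and extends holomorphically to a neighborhood of $\overline{\mathbb{H}}$ because $\Phi$ extends past $\partial \Hm$. The Nevanlinna--Herglotz representation of holomorphic maps $\mathbb{H} \to \overline{\mathbb{H}}$ combined with this global extension across $\mathbb{R}$, together with a preliminary Heisenberg-type automorphism on the $N$-th target block (to absorb any constant offset in $G_N(z,0)$, which would otherwise contribute a non-trivial absolutely continuous part to the representing measure), forces $F(z) = \alpha z + \beta$ with $\alpha > 0$ and $\beta \in \mathbb{R}$. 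Composing with the target automorphism $(\wt{z}, \wt{w}_k) \mapsto ((\wt{z} - \beta)/\alpha,\, \wt{w}_k/\alpha^{1/(2\beta_k)})$ of $\Hn$ then achieves $F(z) = z$, giving (\ref{fx}).

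\textbf{Third}, with $F(z) = z$ the boundary identity becomes $|||w|||^{2\alpha} = \sum_{k=1}^{N}\|G_k(z, w_{\sigma(k)})\|^{2\beta_k}$ on $\partial \Hm$. Setting $w = 0$ forces $G_k(x, 0) = 0$ on $\mathbb{R}$, hence $G_k(z, 0) \equiv 0$ by unique continuation in $z$. Restricting next to $w_{\sigma(k)} = 0$ for all $k \neq k_0$ reduces the identity to $\|G_{k_0}(z, w_{\sigma(k_0)})\|^{2\beta_{k_0}} = \|w_{\sigma(k_0)}\|^{2\alpha_{\sigma(k_0)}}$ on the corresponding boundary slice. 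Expanding $G^{\mu}_{k_0}(z, u) = \sum_{|p| \geq 1} c^{\mu}_p(z)\,u^p$ and matching Hermitian forms on each sphere $\{\|u\| = r\}$ (after multinomially expanding $\|u\|^{2M_{k_0}}$ with $M_{k_0} := \alpha_{\sigma(k_0)}/\beta_{k_0}$) shows that only $|p| = M_{k_0}$ contributes and that $\sum_{\mu}|c^{\mu}_p(z)|^2$ is $z$-independent; since each $c^{\mu}_p$ is holomorphic in $z$, differentiating once in $\overline{z}$ and once in $z$ yields $\sum_{\mu}|(c^{\mu}_p)'(z)|^2 = 0$, so each $c^{\mu}_p$ is a constant, producing (\ref{gw}).

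The main obstacle is the affine rigidity of $F$ in the second step: one must exclude non-trivial absolutely continuous boundary contributions to the Nevanlinna--Herglotz representation, and this requires carefully coordinating the Heisenberg automorphism freedom on the $N$-th block with the structural constraints $F = F(z)$ and $G^{\mu}_k = G^{\mu}_k(z, w_{\sigma(k)})$ established in the first step, so that the normalization does not re-introduce $w$-dependence into $F$.
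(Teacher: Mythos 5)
Your first stage is correct and coincides with the paper's: applying $\wt{z}$ and $\wt{w}^{\mu}_{k}$ to $\Phi_{\ast}W^{\lambda}_{\sigma(j)}\in\tldW_j$ and invoking Corollary~\ref{f0} with $A=0$ does give $F=F(z)$ and $G^{\mu}_{k}=G^{\mu}_{k}(z,w_{\sigma(k)})$. The gaps are in your second and third stages, and they trace back to the fact that you never use the other half of Proposition~\ref{sigmae}, namely $\Phi_{\ast}\mathcal{E}\subset\wt{\mathcal{E}}$. The Nevanlinna--Herglotz step is circular: to kill the representing measure you need $\operatorname{Im}F|_{\mathbb{R}}=0$, i.e. $G_k(x,0)=0$ for every $k$, but you only derive $G_k(z,0)\equiv 0$ in the third stage \emph{from} $F(z)=z$. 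Your proposed repair does not close the loop: a priori $G_k(z,0)$ is a nonconstant holomorphic function of $z$, not a constant that a Heisenberg translation can absorb; and for $k\le N-1$ one has $\beta_k\ge 2$, so $\Hn$ admits no translation automorphisms in the $\wt{w}_k$ directions at all --- your parenthetical silently assumes $G_k(z,0)=0$ for $k<N$, which is exactly what needs proving.

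The third stage has two further problems. First, it presumes $M_{k_0}=\alpha_{\sigma(k_0)}/\beta_{k_0}\in\mathbb{N}$ in order to ``multinomially expand $\|u\|^{2M_{k_0}}$''; in the paper this is Lemma~\ref{alphabeta}, proved \emph{after} and \emph{using} Lemma~\ref{fgex}. Second, for a fixed $z_0$ the boundary identity constrains $u$ only on the single sphere $\|u\|=(\operatorname{Im}z_0)^{1/2\alpha_{\sigma(k_0)}}$, and the restriction of a Hermitian form $\sum_{p,q}\langle c_p(z_0),c_q(z_0)\rangle u^p\bar{u}^q$ to one sphere does not determine its coefficients (e.g. $\|u\|^4/r^2$ and $\|u\|^2$ agree on $\|u\|=r$); varying $r$ does not help directly because the coefficients then also move along a different horizontal line in $z$. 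The paper avoids all of this by applying the radial fields: from $\Phi_{\ast}E_k=\sum_i A^k_i\wt{E}_i$ one first finds some $k\neq\sigma(j)$ with $A^k_j=0$, and $E_kG^{\mu}_j=0$ eliminates the $z$-dependence of $G^{\mu}_j$ outright; then applying $\wt{x}$ to $\Phi_{\ast}E_{\sigma(j)}=A^{\sigma(j)}_j\wt{E}_j$ forces $F=a_0+a_1(x+i|||w|||^{2\alpha})$ and yields $\|w_{\sigma(j)}\|^{2\alpha_{\sigma(j)}}a_1=A^{\sigma(j)}_j\|G_j\|^{2\beta_j}$, which kills the constant term of $G_j$; the final affine automorphism of $\Hn$ then normalizes $a_0,a_1$. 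You need this (or some genuine substitute for the radial-field information) to complete the proof.
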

\begin{proof}
\begin{claim}\label{claimg}
$G^{\mu}_{j}$ is expanded as 
\begin{equation}
G^{\mu}_{j}(w)
=\sum_{|p_{\sigma(j)}| \geq 0}b^{\mu}_{j, p_{\sigma(j)}}(w_{\sigma(j)})^{p_{\sigma(j)}}.  
\label{gex1st}
\end{equation}
\end{claim}
Take any $j=1, \dots, N$ and fix it. 
Since $\Phi_{\ast}W^{\lambda}_{\sigma(i)} \in \tldW_i$, 
by applying $\wt{w}^{\mu}_{j}=G^{\mu}_{j}$ to 
$\Phi_{\ast}W^{\lambda}_{\sigma(i)}, i=1, \dots, N, \lambda=1, \dots, m_{\sigma(i)}$ with $i \neq j$, 
we get the relations: 
\begin{align}
&W^{1}_{\sigma(1)}G^{\mu}_{j}=0, \dots, W^{m_{\sigma(1)}}_{\sigma(1)}G^{\mu}_{j}=0, 
\nn \\
& \qquad \vdots   \nn \\
&W^{1}_{\sigma(N)}G^{\mu}_{j}=0, \dots, W^{m_{\sigma(N)}}_{\sigma(N)}G^{\mu}_{j}=0. 
\nn 
\end{align}
Note that 
$W^{1}_{\sigma(j)}G^{\mu}_{j}=0, \dots, W^{m_{\sigma(j)}}_{\sigma(j)}G^{\mu}_{j}=0$
are not contained in these relations. 
It follows from Corollary~{\ref{f0}} that we obtain the expansion: 
\begin{equation}
G^{\mu}_{j}(x, w)
=\sum_{|p_{\sigma(j)}|+q \geq 0}b^{\mu}_{j, p_{\sigma(j)}, q}
    (w_{\sigma(j)})^{p_{\sigma(j)}}(x+i|||w|||^{2\alpha})^q.  \nn
\end{equation}
We apply $\wt{w}^{\mu}_{j}=G^{\mu}_{j}$ to 
$\Phi_{\ast}E_k=A^k_1\wt{E}_1+\dots+A^k_N\wt{E}_N$ 
with $k \neq \sigma(j)$. 
Assume that $A^{k}_{j} \neq 0$ for all $k=1, \dots, N$ and $k \neq \sigma(j)$. 
Then we get the relation 
$E_kG^{\mu}_{j}=A^k_j\wt{E}_j\wt{w}^{\mu}_{j}=(A^k_j/\beta_j)G^{\mu}_{j}$. 
This relation is rewritten as 
\begin{equation}
2i||w_k||^{2\alpha_k}\sum_{\substack{|p_{\sigma(j)}|+q \geq 0 \\ q \geq 1}}b^{\mu}_{j, p_{\sigma(j), q}} 
(w_{\sigma(j)})^{p_{\sigma(j)}}q(x+i|||w|||^{2\alpha})^{q-1}
=\dfrac{A^k_j}{\beta_j}G^{\mu}_{j}.    \nn
\end{equation}
By comparing the terms with $||w_k||^{2\alpha_k}$, 
the both sides must be zero. 
Since $G^{\mu}_{j}$ does not vanish identically, $A^k_j=0$. 
This is a contradiction. 
Thus there exists $k=1, \dots, N$ such that $k \neq \sigma(j)$ 
and $A^{k}_{j}=0$. 
Take such $k$ and apply $\wt{w}^{\mu}_{j}=G^{\mu}_{j}$ to 
$\Phi_{\ast}E_k=A^k_1\wt{E}_1+\dots+A^k_N\wt{E}_N$ 
to get the relation $E_kG^{\mu}_{j}=0$. 
This is rewritten as 
\begin{equation}
2i||w_k||^{2\alpha_k}\sum_{\substack{|p_{\sigma(j)}|+q \geq 0 \\ q \geq 1}}b^{\mu}_{j, p_{\sigma(j), q}} 
(w_{\sigma(j)})^{p_{\sigma(j)}}q(x+i|||w|||^{2\alpha})^{q-1}=0.  \nn
\end{equation}
This means that the coefficients $b^{\mu}_{j, p_{\sigma(j)}, q}$ vanish for 
$|p_{\sigma(j)}|+q \geq 0, q \geq 1$. 
Now we conclude that 
$G^{\mu}_{j}$ has an expansion of the form (\ref{gex1st}). 

\vspace{1\baselineskip}

Let us denote by 
$\Phi_{\ast}E_{\sigma(1)}=A^{\sigma(1)}_{1}\wt{E}_1+\dots+A^{\sigma(1)}_{N}\wt{E}_N$. 
Apply $\wt{w}^{\mu}_{j}=G^{\mu}_{j}$ with $j \neq 1$ to get 
$(A^{\sigma(1)}_{j}/\beta_j)G^{\mu}_{j}=0$ and this means that 
$A^{\sigma(1)}_{j}=0$ for $j \neq 1$. 
By the same reason, we have $A^{\sigma(k)}_{j}=0$ for $j\neq k$, namely, we have 
$\Phi_{\ast}E_{\sigma(j)}=A^{\sigma(j)}_j\wt{E}_j$, which will be used in the next claim. 
\begin{claim}  
$F$ is expanded as 
\begin{equation}
F(x, w)=a_0+a_1(x+i|||w|||^{2\alpha}). \label{fex1st} 
\end{equation}
\end{claim}
First of all we apply $\wt{x}=(F+\ol{F})/2$ to 
$\Phi_{\ast}W^{\lambda}_{\sigma(1)} \in \tldW_1, \dots, \Phi_{\ast}W^{\lambda}_{\sigma(N)} \in \tldW_N$ 
to get 
\begin{align}
&W^{1}_{\sigma(1)}F=0, \dots, W^{m_{\sigma(1)}}_{\sigma(1)}F=0, 
\nn \\
& \qquad \vdots   \nn \\
&W^{1}_{\sigma(N)}F=0, \dots, W^{m_{\sigma(N)}}_{\sigma(N)}F=0. 
\nn 
\end{align}
By the same argument as in Claim~{\ref{claimg}}, $F$ is a function of variable $x+i|||w|||^{2\alpha}$. 
Next applying  $\wt{x}=(F+\ol{F})/2$ to 
$\Phi_{\ast}E_{\sigma(j)}=A^{\sigma(j)}_j\wt{E}_j$
and using the expansion (\ref{fwx}), we get 
\begin{equation}
||w_{\sigma(j)}||^{2\alpha_{\sigma(j)}}\sum_{q \geq 1}a_{p, q}q(x+i|||w|||^{2\alpha})^{q-1}
=A^{\sigma(j)}_j||G_j||^{2\beta_j}. \label{fex2}
\end{equation} 
Since the right hand side does not contain the variable $x$, we obtain $a_{p, q}=0$ for $q \geq 2$. 
This leads to an expansion of the form (\ref{fex1st}). 

\vspace{1\baselineskip}

Go back to the proof of lemma. 
The equation (\ref{fex2}) is reduced to 
$||w_{\sigma(j)}||^{2\alpha_{\sigma(j)}}a_1 \\ =A^{\sigma(j)}_j||G_j||^{2\beta_j}$.  
This means that $G^{\mu}_{j}$ does not have a constant term, 
which implies the desired expansion of (\ref{gw}). 
On the boundary $\pa \Hm$, we have 
\begin{equation}
\text{Im} \; (a_0+a_1(x+i|||w|||^{2\alpha}))=|||G|||^{2\beta}. 
\end{equation}
Since the right hand side does not have the constant term and the terms containing $x$, 
we obtain that $a_0$ and $a_1$ are real and also obtain that $a_1 >0$.  
Since the mapping 
\begin{equation}
(\wt{z}, \wt{w}) \mapsto (\dfrac{\wt{z}-a_0}{a_1}, 
\dfrac{\wt{w}_1}{a^{1/2\beta_{\sigma(1)}}_1}, \dots, 
\dfrac{\wt{w}_{N-1}}{a^{1/2\beta_{\sigma(N-1)}}_1}, \dfrac{\wt{w}_N}{a^{1/2}_1})
\end{equation}
is an automorphism of $\Hn$. 
It follows that $F$ has an desired expansion (\ref{fx}). 
This automorphism does not change the form of (\ref{gw}). 
\end{proof}

\section{The relations between exponents}
Making use of the expansions of $F$ and $G^{\lambda}_{j}$, 
we will get the relations between exponents $\alpha_j$ and $\beta_j$. 
It follows from Lemma~{\ref{fgex}} that we have $|||w|||^{2\alpha}=|||G|||^{2\beta}$ 
on the boundary of $\Hm$. 
Take $j$ with $\sigma(j)=N$. 
We restrict $|||w|||^{2\alpha}=|||G|||^{2\beta}$ to 
the variety 
$w_1=\dots=w_{N-1}=0$. 
Then it is reduced to 
$||w_N||^{2}=||G_j(w_{N})||^{2\beta_j}$. 
Substitute the expansion of $G^{\lambda}_j(w_{N})$ into this and 
compare the degree of $w_N$. 
Then we obtain $\beta_j=1$, which implies that $j=N$. 
Hence we get $\sigma(N)=N$. 
Thus, from now on, we consider $\sigma$ as a permutation of $\{1, \dots, N-1\}$. 

Now we obtain the relations between the exponents $\alpha$ and $\beta$. 
\begin{lemma}\label{alphabeta}
There exist $M_1, \dots, M_{N-1} \in \mathbb{N}$ such that 
$\alpha_{\sigma(j)}=M_{j}\beta_j$ for $j=1, \dots, N-1$. 
\end{lemma}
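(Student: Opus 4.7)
The plan is to convert the boundary condition into a polynomial identity in $w, \bar w$ and restrict it to a one-block subvariety. Using the expansion of $F$ from Lemma~\ref{fgex}, the requirement that $\Phi$ maps $\pa \Hm$ into $\pa \Hn$ becomes the polynomial identity $|||w|||^{2\alpha} = |||G(w)|||^{2\beta}$. Fix $j \in \{1, \dots, N-1\}$ and restrict this identity to the subvariety $V_j = \{w : w_k = 0 \text{ for all } k \neq \sigma(j)\}$. By Lemma~\ref{fgex} every $G_k$ is a polynomial in $w_{\sigma(k)}$ alone with no constant term, and since $\sigma$ is a permutation with $\sigma(N) = N$ we have $\sigma(k) \neq \sigma(j)$ for every $k \neq j$. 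Hence each such $G_k$ vanishes identically on $V_j$, and the identity collapses to
$$||w_{\sigma(j)}||^{2\alpha_{\sigma(j)}} = ||G_j(w_{\sigma(j)})||^{2\beta_j}.$$

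To obtain the relation $\alpha_{\sigma(j)} = M_j \beta_j$, I would decompose each component $G^\mu_j = \sum_k G^\mu_{j,(k)}$ into its holomorphic homogeneous summands in $w_{\sigma(j)}$, and let $D$ and $D'$ be the smallest and largest integers $k$ for which some $G^\mu_{j,(k)}$ is nontrivial. Then the bi-degree $(D,D)$ piece of $||G_j||^2$ equals $||G_{j,(D)}||^2 \not\equiv 0$, and the bi-degree $(D',D')$ piece equals $||G_{j,(D')}||^2 \not\equiv 0$. Raising to the $\beta_j$-th power, the smallest and largest bi-degrees appearing in $||G_j||^{2\beta_j}$ are $(D\beta_j, D\beta_j)$ and $(D'\beta_j, D'\beta_j)$, each with a nonzero coefficient (a positive integer power of a nonzero real-valued polynomial). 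Since the left side is bi-homogeneous of bi-degree $(\alpha_{\sigma(j)}, \alpha_{\sigma(j)})$, one is forced to $D\beta_j = D'\beta_j = \alpha_{\sigma(j)}$, whence $D = D' =: M_j$ and $\alpha_{\sigma(j)} = M_j \beta_j$. The hypothesis $\alpha_{\sigma(j)} \geq 2$ then ensures $M_j \geq 1$, so $M_j \in \mathbb{N}$.

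The main delicacy is verifying that the extremal bi-degree coefficients in $||G_j||^{2\beta_j}$ do not cancel out when the $\beta_j$-th power is expanded. This is automatic, however, because any collection of $\beta_j$ bi-degrees from $||G_j||^2$ whose componentwise sum equals $(D\beta_j, D\beta_j)$ must consist entirely of copies of $(D,D)$ (since $D$ is the minimum in each slot), so the coefficient is $(||G_{j,(D)}||^2)^{\beta_j}\not\equiv 0$, and analogously at the top. An alternative route would be to further restrict to a complex line $w_{\sigma(j)} = (t, 0, \dots, 0)$ and invoke unique factorization in $\mathbb{C}[t, \bar t]$ to conclude that $\sum_\mu |G^\mu_j(t, 0, \dots, 0)|^2$ is a monomial in $(t, \bar t)$, yielding the same relation.
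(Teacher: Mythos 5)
Your proof is correct and follows essentially the same route as the paper: restrict the identity $|||w|||^{2\alpha}=|||G|||^{2\beta}$ to the variety where only $w_{\sigma(j)}$ survives and compare homogeneous degrees in the resulting one-block identity $||w_{\sigma(j)}||^{2\alpha_{\sigma(j)}}=||G_j(w_{\sigma(j)})||^{2\beta_j}$. Your organization via the extremal bidegrees $D$ and $D'$ (with the explicit no-cancellation check) replaces the paper's case analysis around $M\beta_1<\alpha_{\sigma(1)}<(M+1)\beta_1$ and incidentally already yields the homogeneity $D=D'$ that the paper only extracts at the start of Section~5, but the substance is the same.
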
  

\begin{proof}
Once we get the relation between $\alpha_{\sigma(1)}$ and $\beta_1$, the relations between 
$\alpha_{\sigma(j)}$ and $\beta_j$ for $2 \leq j \leq N-1$ 
are obtained by the same argument. 
So, it suffices to show the case $j=1$. 
First we restrict the expansions (\ref{fx}) and (\ref{gw}) to the variety 
$w_{\sigma(2)}=\dots=w_{\sigma(N-1)}=w_N=0$
and obtain the relation on the boundary $\pa \Hm$: 
\begin{equation}
||w_{\sigma(1)}||^{2\alpha_{\sigma(1)}}=
||\sum_{|p_{\sigma(1)}| \geq 1}b_{1, p_{\sigma(1)}}
(w_{\sigma(1)})^{p_{\sigma(1)}}||^{2\beta_1}. \label{ab}
\end{equation}

Case 1: $\alpha_{\sigma(1)}<\beta_1$. 
Comparing the minimal total degree of 
$w_{\sigma(1)}$ and $\ol{w_{\sigma(1)}}$ in the both sides, 
we reach a contradiction. 

Case 2: $\alpha_{\sigma(1)} \geq \beta_1$. 
Assume that there does not exist $M_1 \in \mathbb{N}$ as in the lemma. 
Take $M \in \mathbb{N}$  such that 
$M\beta_1 < \alpha_{\sigma(1)} <(M+1)\beta_1$. 
The terms of $w_{\sigma(1)}$ and $\ol{w_{\sigma(1)}}$ which have total degree  
less than or equal to $2M\beta_1$ on the right hand side are zero. 
Hence the equation (\ref{ab}) becomes 
\begin{equation}
||w_{\sigma(1)}||^{2\alpha_{\sigma(1)}}
=||\sum_{|p_{\sigma(1)}| \geq M+1}
b_{1; p_{\sigma(1)}}
    (w_{\sigma(1)})^{p_{\sigma(1)}}||^{2\beta_1}. 
         \nn
\end{equation}
Comparing the minimal total degree of $w_{\sigma(1)}$ and $\ol{w_{\sigma(1)}}$ 
in the both sides, we reach a contradiction. 
Hence we obtain $\alpha_{\sigma(1)}=M_1\beta_1$ for some $M_1 \in \mathbb{N}$. 
This completes the proof. 
\end{proof}

\section{Normalization of the mapping}
Substituting $\alpha_{\sigma(1)}=M_1 \beta_1$ to (\ref{ab}) and 
comparing the total degree of 
$w_{\sigma(1)}$ and $\ol{w_{\sigma(1)}}$, 
we conclude that $G^{\mu}_{1}$ is of the form: 
\begin{equation}
G^{\mu}_{1}(w_{\sigma(1)})=\sum_{|p_{\sigma(1)}|=M_1}
   b^{\mu}_{1;p_{\sigma(1)}}(w_{\sigma(1)})^{p_{\sigma(1)}}.  \nn
\end{equation}
Analogously, we conclude that $G^{\mu}_{j}, j=1, \dots, N,$ have the following expansions:
\begin{align}
&G^{\mu}_{j}(w_{\sigma(j)})=\sum_{|p_{\sigma(j)}|=M_j}
 b^{\mu}_{j, p_{\sigma(j)}}(w_{\sigma(j)})^{p_{\sigma(j)}}, 
\; j=1, \dots, N-1, \; \mu=1, \dots, n_j, \label{gm} \\
&G^{\mu}_{N}(w_N)=\sum_{|p_N|=1}b^{\mu}_{N, p_N}(w_N)^{p_N},  
    \;  \mu=1, \dots, n_N. \nn
\end{align} 

We give a normalization of one block case by 
restricting $\Phi=(F, G)$ to the variety $w_{\sigma(2)}=\dots=w_{\sigma(N-1)}=w_N=0$.  
This restriction does not influence to $G_1$. 
The reduced mapping is now 
\begin{equation}
(F, G_1):\{\text{Im}z=||w_{\sigma(1)}||^{2\alpha_{\sigma(1)}}\} \to 
\{\text{Im}\wt{z}=||\wt{w}_1||^{2\beta_1}\}. 
\end{equation}

\begin{lemma}\label{dangelonormal}
Let $\alpha_{\sigma(1)}=M_1\beta_1$. Then we have the following: 

$\mathrm{(a)}$ \;   
$n_1 \geq \dfrac{(M_1+m_{\sigma(1)}-1)!}{M_1!(m_{\sigma(1)}-1)!}; $ and 

$\mathrm{(b)}$ \; 
$(F, G_1)$ is equivalent to 
$(z, H_{M_1}(w_{\sigma(1)}), 0)$. 
\end{lemma}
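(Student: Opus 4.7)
The plan is to recognize the restricted problem as an instance of D'Angelo's norm identity and read off (a) and (b) by direct coefficient comparison. First I combine the homogeneous expansion (\ref{gm}) for $j=1$ with the boundary identity for the reduced target. From Lemma~\ref{fgex} we have $F(x,w)=x+i|||w|||^{2\alpha}$; restricted to the variety $w_{\sigma(2)}=\dots=w_{\sigma(N-1)}=w_N=0$ this gives $\mathrm{Im}\,F=||w_{\sigma(1)}||^{2\alpha_{\sigma(1)}}$. The restricted mapping lands in the target one-block slice because each $G_j$ with $j\neq 1$ vanishes on this variety (since $G_j$ depends only on $w_{\sigma(j)}$ and carries no constant term), so the target boundary equation reduces to $\mathrm{Im}\,F=||G_1||^{2\beta_1}$. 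Using $\alpha_{\sigma(1)}=M_1\beta_1$ and extracting $\beta_1$-th roots of nonnegative real-analytic functions yields the polynomial identity
\begin{equation*}
||w_{\sigma(1)}||^{2M_1}=||G_1(w_{\sigma(1)})||^2.
\end{equation*}

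Next, write $G_1^\mu=\sum_{|p|=M_1}b_p^\mu(w_{\sigma(1)})^p$, invoke the multinomial identity $||w_{\sigma(1)}||^{2M_1}=\sum_{|p|=M_1}(M_1!/p!)|(w_{\sigma(1)})^p|^2$, and match coefficients of the monomials $(w_{\sigma(1)})^p\overline{(w_{\sigma(1)})^q}$. This yields
\begin{equation*}
\sum_{\mu=1}^{n_1}b_p^\mu\overline{b_q^\mu}=\delta_{pq}\dfrac{M_1!}{p!}\qquad(|p|=|q|=M_1).
\end{equation*}
Setting $c_p^\mu:=\sqrt{p!/M_1!}\,b_p^\mu$ recasts this as orthonormality of the columns of the $n_1\times\binom{M_1+m_{\sigma(1)}-1}{M_1}$ matrix $C=(c_p^\mu)$. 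Since orthonormal columns in $\mathbb{C}^{n_1}$ cannot exceed $n_1$ in number, $n_1\geq\binom{M_1+m_{\sigma(1)}-1}{M_1}$, which is (a). By construction $G_1=C\cdot H_{M_1}(w_{\sigma(1)})$.

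For (b), extend $C$ to a unitary $U\in U(n_1)$ by adjoining $n_1-\binom{M_1+m_{\sigma(1)}-1}{M_1}$ further orthonormal columns. The map $(\tilde z,\tilde w_1)\mapsto(\tilde z,U^{-1}\tilde w_1)$ is an automorphism of $\Hn$, since $\Hn$ is invariant under block-wise unitaries in the $\tilde w$-variables. Post-composing $(F,G_1)$ with this automorphism sends $G_1$ to $(H_{M_1}(w_{\sigma(1)}),0)$, while $F=z$ is unchanged. This is the desired equivalence.

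The main (modest) obstacle is justifying the $\beta_1$-th root extraction as a \emph{polynomial} identity: both sides of $||w_{\sigma(1)}||^{2M_1\beta_1}=||G_1||^{2\beta_1}$ are $\beta_1$-th powers of nonnegative real-analytic functions that agree pointwise on the boundary, so their unique nonnegative $\beta_1$-th roots agree as real-analytic functions, and the resulting identity then holds as a polynomial identity in $w_{\sigma(1)}$ and $\overline{w_{\sigma(1)}}$. Once this is established, the rest is a standard linear-algebra reading of D'Angelo's norm identity on the basis of degree-$M_1$ monomials.
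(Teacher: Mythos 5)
Your proposal is correct and follows essentially the same route as the paper: reduce to the one-block norm identity $||w_{\sigma(1)}||^{2M_1}=||G_1||^2$, read off from coefficient comparison that the normalized coefficient vectors form an orthonormal system of $\binom{M_1+m_{\sigma(1)}-1}{M_1}$ vectors in $\mathbb{C}^{n_1}$ (giving (a)), and extend to a unitary acting on the $\wt{w}_1$-block to obtain $(H_{M_1}(w_{\sigma(1)}),0)$ (giving (b)). Your explicit justification of the $\beta_1$-th root extraction is a small point the paper passes over silently, but it does not change the argument.
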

\begin{proof}
We introduce two notations: 
\begin{equation}
T_{M, m}=\dfrac{(M+m-1)!}{M!(m-1)!}=
\begin{pmatrix}
M+m-1 \\
m-1
\end{pmatrix}, \quad 
C_{M, p}=\sqrt{\dfrac{M!}{p^{1}! \dots p^{m}!}} \nn
\end{equation}
for $M, m \in \mathbb{N}$ and $p=(p^{1}, \dots, p^{m}) \in \mathbb{N}^m$. 
Note that $T_{M, m}$ is the number of solutions of $x_1+\dots+x_{m}=M$ 
for $x_1, \dots, x_{m} \in \mathbb{Z}_{\geq 0}$. 
Substituting (\ref{gm}) and $a_{\sigma(1)}=M_1\beta_1$ into (\ref{ab}) and 
taking the $\beta_1$-th root, we get the equation: 
\begin{equation}
||w_{\sigma(1)}||^{2M_1}=||\sum_{|p_{\sigma(1)}|=M_1}b_{1;p_{\sigma(1)}}
(w_{\sigma(1)})^{p_{\sigma(1)}}||^2. \label{M1}
\end{equation}
Since the coefficients of 
$|(w_{\sigma(j)})^{p_{\sigma(j)}}|^2$ with $|p_{\sigma(j)}|=M_1$ 
on the left hand side is $C_{M_1, p_{\sigma(1)}}^2$ and the one on the right hand side is 
$|b^{1}_{1; p_{\sigma(1)}}|^2+\dots+|b^{n_1}_{1; p_{\sigma(1)}}|^2$, 
the vector $(1/C_{M_1,p_{\sigma(1)}})b_{1; p_{\sigma(1)}} \in \mathbb{C}^{n_1}$ 
is a unit vector. 
On the other hand, 
picking up the terms $(w_{1})^{p_{\sigma(1)}}(\ol{w_1})^{\hat{p}_{\sigma(1)}}$ with 
$p_{\sigma(1)} \ne \hat{p}_{\sigma(1)}$ from the both sides of (\ref{M1}), 
we have $<b_{1; p_{\sigma(1)}}, b_{1; \hat{p}_{\sigma(1)}}>=0$ for 
$p_{\sigma(1)} \ne \hat{p}_{\sigma(1)}$. 
Thus the set $\{(1/C_{M_1,p_{\sigma(1)}})b_{1; p_{\sigma(1)}}\}$ 
consisting of $T_{M_1, m_{\sigma(1)}}$ vectors is an orthogonal systems in 
$\mathbb{C}^{n_1}$; and hence, 
$T_{M_1, m_{\sigma(1)}} \leq n_1$.  
This is the inequality required in (a). 
For simplicity, we now denote by   
$\{U_1, \dots, U_{T_{M_1, m_{\sigma(1)}}}\}$ this orthogonal system. 
Then one can find unit vectors 
$D_1, \dots, \\ D_{n_1-T_{M_1, m_{\sigma(1)}}} \in \mathbb{C}^{n_1}$ 
in such a way that 
\begin{equation}
U=
\begin{pmatrix}
U_1 \\
\vdots \\
U_{T_{M_1, m_{\sigma(1)}}} \\
D_1 \\
\vdots \\
D_{n_1-T_{M_1, m_{\sigma(1)}}}
\end{pmatrix} \nn
\end{equation}
is an $n_1 \times n_1$ unitary matrix. 
Note that the transformation 
\begin{equation}
\Hn \ni (\wt{z}, \wt{w}_1, \dots, \wt{w}_N) \mapsto 
(\wt{z}, \wt{w}_1\ol{U}^t, \wt{w}_2, \dots, \wt{w}_N) \in \Hn  \nn
\end{equation}
is an automorphism of $\Hn$. 
Applying $U$ to $G_1$ as follows, we obtain the second part of the lemma. 
\begin{align}
G_1\ol{U}^t
&=\Bigl(\sum_{|p_{\sigma(1)}|=M_1}b^1_{1;p_{\sigma(1)}}(w_{\sigma(1)})^{p_{\sigma(1)}}, 
   \dots, \sum_{|p_{\sigma(1)}|=M_1}b^{n_1}_{1;p_{\sigma(1)}}
        (w_{\sigma(1)})^{p_{\sigma(1)}} \Bigr) \ol{U}^t  \nn \\
&=(\dots, \dfrac{C^{2}_{M_1, m_{\sigma(1)}}}{C_{M_1, p_{\sigma(1)}}}(w_{\sigma(1)})^{p_{\sigma(1)}}, 
     \dots, 0, \dots) \nn \\
&=(\dots, C_{M_1, p_{\sigma(1)}}(w_{\sigma(1)})^{p_{\sigma(1)}}, \dots, 0, \dots). \nn
\end{align}
\end{proof}

The normalization of $G_1$ in Lemma~{\ref{dangelonormal}} is obtained by 
restricting the situation to $w_{\sigma(2)}=\dots=w_{\sigma(N-1)}=w_N=0$. 
By the same reason, restricting the situation to the variety
\begin{equation}
\Bigl(\bigcap_{\substack{j=1, \dots, N-1 \\ j \ne k}}\{w_{\sigma(j)}=0\} \Bigr) \cap \{w_N=0\},  \nn
\end{equation}
we can normalize $G_k$ as 
\begin{equation}
G_k(w_{\sigma(k)})=(H_{M_k}(w_{\sigma(k)}), 0). \nn 
\end{equation}

Restricting the mapping to $w_{\sigma(1)}=\dots=w_{\sigma(N-1)}=0$ 
and tracing the same argument as above, we can normalize $G^{\lambda}_{N}$ 
by using $T_{1, m_N}$ and $C_{1, p_N}$. 
Then we conclude that $G_N$ is equivalent to $(w_N, 0)$. 

\section{proof of the main theorem}
Now we have proved that $(F, G)$ is equivalent to  
\begin{equation}
(z, H_{M_1}(w_{\sigma(1)}), 0, \dots, H_{M_{N-1}}(w_{\sigma(N-1)}), 0, w_N, 0)
\end{equation}
as a mapping between $\Hm$ and $\Hn$. 
We pull back this normalization to the mapping between 
$\Em$ and $\En$ via $\Psi$ and $\wt{\Psi}$. 
By calculation, we obtain that 
$(\mathcal{F}, \mathcal{G})$ is equivalent to 
\begin{equation}
(z, H_{M_1}(w_{\sigma(1)}), 0, \dots, H_{M_{N-1}}(w_{\sigma(N-1)}), 0, w_N, 0). \nn
\end{equation}
This finishes the proof of the Main Theorem. 

\section{Comparison with the previous result}
In this section, we compare our result with author's previous result in \cite{Ha2}. 
Notation is the same as in the present paper. 
\begin{mtTJM}
\itshape{Let $E(m;(m);(\alpha))$ and $E(n;(n);(\beta))$ be 
generalized pseudoellipsoids with $N$ blocks.  
Suppose that $2<m_j, 3<n_j$ and that 
$n-m<\mathrm{min}\{n_1, \dots, n_N\}$.    
Let $(\mathcal{F}, \mathcal{G}_1, \dots, \mathcal{G}_N) : 
E(m;(m);(\alpha)) \to E(n;(n);(\beta))$ 
be a proper holomorphic mapping that is 
holomorphic up to the boundary. 
Assume that non-zero columns of any block row in the Jacobian matrix of 
the unbounded representation 
of $(\mathcal{F}, \mathcal{G})$ are linearly independent. 
Then we have the following: 

$\mathrm{(1)}$ There exists a permutation $\sigma$ of $\{1, \dots, N\}$ such that 
$\mathcal{G}_j|_{w_{\sigma(j)}=0}=0$ for every $j$. 

$\mathrm{(2)}$ If the permutation $\sigma$ in $\mathrm{(1)}$ satisfies the inequality 
$m_{\sigma(j)} \leq n_j<2m_{\sigma(j)}-1$ for every $j$,  
then $\alpha_{\sigma(j)}=\beta_j$ for all $j$ and the proper holomorphic mapping 
$(\mathcal{F}, \mathcal{G}_1, \dots, \mathcal{G}_N)$ 
is equivalent to 
$(\widetilde{\mathcal{F}}, \widetilde{\mathcal{G}}_1, \dots, \widetilde{\mathcal{G}}_N)$ 
of the form 
\begin{equation}
\widetilde{\mathcal{F}}(z, w_1, \dots, w_N)=z, \; 
\widetilde{\mathcal{G}}_j(z, w_1, \dots, w_N)=(w_{\sigma(j)}, 0), \; 
1 \leq j \leq N. \nn
\end{equation}}
\end{mtTJM}
In \cite{Ha2}, we assume that 
$m_{\sigma(j)} \leq n_j<2m_{\sigma(j)}-1$ for every $j$. 
If we add this inequality to Main Theorem~{\ref{main}} as an assumption, 
then, combining with (b) in Main Theorem~{\ref{main}}, 
we have an inequality: 
\begin{equation}
\dfrac{(M_j+m_{\sigma(j)}-1)!}{M_j ! (m_{\sigma(j)}-1)!} \leq n_j < 2m_{\sigma(j)}-1. \label{dimineq}
\end{equation} 
Note that 
$\dfrac{(M_j+m_{\sigma(j)}-1)!}{M_j ! (m_{\sigma(j)}-1)!}$ 
is an increasing function with respect to $M_j$ and inequality (\ref{dimineq}) does not hold 
for $M_j=2$. 
Hence inequality (\ref{dimineq}) implies that $M_j=1$. 
This implies $\alpha_{\sigma(j)}=\beta_j$ in Main Theorem~{\ref{main}}, 
which is the same conclusion as 
in Main Theorem in \cite{Ha2}. 
Since the function defined by (\ref{h}) satisfies $H_1(z)=z$, 
under the assumption on dimensions 
$m_{\sigma(j)} \leq n_j < 2m_{\sigma(j)}-1$ as in \cite{Ha2}, 
our mapping $(\mathcal{F}, \mathcal{G})$ is equivalent to the mapping 
\begin{equation}
(z, w) \mapsto (z, w_{\sigma(1)}, 0, \dots, w_{\sigma(N-1)}, 0, w_N, 0), \label{totalembed}
\end{equation}
which 
is also the same conclusion in \cite{Ha2}. 
As a result, we conclude the following. 
\begin{theorem}
We add the assumption $m_{\sigma(j)} \leq n_j < 2m_{\sigma(j)}-1$ to 
Main Theorem~$\mathrm{\ref{main}}$. 
Then it implies Main Theorem in \cite{Ha2}. 
\end{theorem}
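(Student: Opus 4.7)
The plan is to show that the added dimensional restriction $m_{\sigma(j)} \le n_j < 2m_{\sigma(j)} - 1$, when fed into part (3) of Main Theorem~\ref{main}, forces every integer $M_j$ to equal $1$; once that is established, the remaining conclusions of the Main Theorem in \cite{Ha2} fall out by specializing parts (2) and (4) of Main Theorem~\ref{main}.

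First I would invoke Main Theorem~\ref{main} itself. Its hypotheses ($m_j \ge 2$ and all components nonconstant) are strictly weaker than those of the Main Theorem in \cite{Ha2} (which requires $2 < m_j$, $3 < n_j$, a codimension bound and a Jacobian nondegeneracy), so its conclusions apply without further work. This supplies a permutation $\sigma$, integers $M_1, \dots, M_{N-1}$ with $\alpha_{\sigma(j)} = M_j \beta_j$, the dimension bound $n_j \ge \frac{(M_j + m_{\sigma(j)} - 1)!}{M_j!(m_{\sigma(j)} - 1)!}$, and the explicit normal form in (4). Combining that dimension bound with the added hypothesis $n_j < 2m_{\sigma(j)} - 1$ yields precisely inequality (\ref{dimineq}).

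Next I would rule out $M_j \ge 2$. The binomial coefficient $\binom{M_j + m_{\sigma(j)} - 1}{M_j}$ is strictly increasing in $M_j$ for fixed $m_{\sigma(j)} \ge 2$, so it suffices to show that $M_j = 2$ is inconsistent with (\ref{dimineq}). A one-line check does this: the inequality $\binom{m+1}{2} < 2m - 1$ simplifies to $(m-1)(m-2) < 0$, which has no integer solution with $m \ge 2$. On the other hand $M_j = 1$ gives the valid inequality $m_{\sigma(j)} < 2m_{\sigma(j)} - 1$, i.e.\ $m_{\sigma(j)} \ge 2$, which is part of the standing assumption. Hence $M_j = 1$ for every $j = 1, \dots, N-1$.

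Finally, substituting $M_j = 1$ into Main Theorem~\ref{main}(2) gives $\alpha_{\sigma(j)} = \beta_j$, and the defining formula (\ref{h}) specializes to $H_1(w_{\sigma(j)}) = w_{\sigma(j)}$. Plugging these into part (4) of Main Theorem~\ref{main} produces exactly the normal form (\ref{totalembed}), which is the conclusion of the Main Theorem in \cite{Ha2}. I expect no serious obstacle: the argument is a three-step chain (apply Main Theorem~\ref{main}, combine the two dimension inequalities, eliminate $M_j = 2$ algebraically), and the only mildly delicate point is the monotonicity of the binomial coefficient together with the quadratic inequality $(m-1)(m-2) < 0$.
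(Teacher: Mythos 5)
Your proposal is correct and follows essentially the same route as the paper: combine Main Theorem~\ref{main}(3) with the added bound to obtain (\ref{dimineq}), use monotonicity of $\frac{(M_j+m_{\sigma(j)}-1)!}{M_j!(m_{\sigma(j)}-1)!}$ in $M_j$ together with the failure of the inequality at $M_j=2$ to force $M_j=1$, and then read off $\alpha_{\sigma(j)}=\beta_j$ and $H_1(w)=w$ to recover (\ref{totalembed}). Your explicit reduction of the $M_j=2$ case to $(m-1)(m-2)<0$ is a nice touch the paper leaves implicit, but the argument is the same.
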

Since $M_j$ is invariant under the composition of automorphisms of $\Em$ and $\En$, 
the mapping 
\begin{equation}
(z, w) \mapsto 
(z, H_{M_1}(w_{\sigma(1)}), 0, \dots, H_{M_{N-1}}(w_{\sigma(N-1)}), 0, w_N, 0) \nn
\end{equation}
with $M_j \not=1$ is not equivalent to the mapping (\ref{totalembed}). 
Finally we would like to refer to the assumption on the Jacobian matrix in 
Main Theorem in \cite{Ha2}. 
Let $\mathcal{G}_j$ be a component of a proper holomorphic mapping as in 
Main Theorem~${\ref{main}}$. 
The assumption on the Jacobian matrix in Main Theorem in \cite{Ha2} means that 
the non-zero columns of the Jacobian matrix of $\mathcal{G}_j$ 
are linearly independent for any $j=1, \dots, N-1$. 
This condition is satisfied for $\mathcal{G}_j(w_{\sigma(j)})=(H_{M_j}(w_{\sigma(j)}), 0)$.

\end{document}